 \newtheorem*{acknowledgment}{Acknowledgment}
\newtheorem{lemma}{Lemma}
\newtheorem{proposition}{Proposition}
\newtheorem{theorem}{Theorem}
\numberwithin{equation}{section}
\title[ Modified Hawking
mass and Rigidity of free boundary minimal disks]{ Rigidity of free boundary minimal disks in\\ mean convex three-manifolds}
\author{Rondinelle Batista}
\author{Barnab\'{e} Lima}
\author{Jo\~ao Silva}
\address{Universidade Federal do Piau\'{i} - UFPI, Departamento de Matem\'{a}tica, Campus Petr\^onio Portella, 64049-550, Teresina / PI, Brazil} \email{rmarcolino@ufpi.edu.br}
\email{barnabe@ufpi.edu.br}
\email{vinicius130silva@ufpi.edu.br}
\subjclass[2010]{Primary 53C25, 53C20, 53C21; Secondary 53C65}
\keywords{Free boudary minimal surfaces; · Scalar curvature; Mean curvature;}
\begin{document}

\newcommand{\spacing}[1]{\renewcommand{\baselinestretch}{#1}\large\normalsize}
\spacing{1.2}

\begin{abstract}
The purpose of this article is to study rigidity of free boundary minimal two-disks
that locally maximize the modified Hawking mass on a Riemannian three-manifold with a positive lower bound on its scalar curvature and mean convex boundary.  Assuming the strict stability of $\Sigma$, we prove that a neighborhood of it in $M$ is isometric to one of
the half de Sitter-Schwarzschild space.
\end{abstract}

\maketitle

\section{Introduction and statement of the main results}\label{introd}

In the last decades very much attention has been given  to study Riemannian manifolds with  scalar curvature bounded from below since it has strong connections with theory of minimal surfaces.

Several rigidity results involving the scalar curvature have been obtained assuming the existence of area-minimizing surfaces. In \cite{Cai} Cai and Galloway showed that if a Riemannian three-manifold with nonnegative scalar curvature contains an embedded, two-sided, locally area-minimizing two-torus $\Sigma$, then the metric is flat in some neighborhood of $\Sigma$. Later, in \cite{Brendle} Bray, Brendle and Neves showed that  if $(M^3, g)$ is
a Riemannian three-manifold with scalar curvature $R\geq 2$ and $\Sigma^2\subset M^3$ is an embedded two-sphere which is locally area-minimizing, then $\Sigma$ has area less than
or equal to $4\pi$ and if moreover the equality holds, then with the induced metric $\Sigma$ has constant Gauss curvature equal to $1$ and locally $M$ splits along $\Sigma$. Afterward, Nunes \cite{Nunes} studied the hyperbolic setting; he proved that if $(M^3, g)$ is a Riemannian three-manifold
with scalar curvature $R\geq -2$ and $\Sigma^2\subset M^3$ is a two-sided compact embedded Riemann surface of genus $\gamma(\Sigma)\geq 2$ which is locally area-minimizing, then the area
of  $\Sigma$ with respect to the induced metric is greater than or equal to $ 4\pi(\gamma(\Sigma)-1)$. Moreover, if
equality holds, then $\Sigma$ has constant Gauss curvature equal to $-1$ and locally $M$ splits along $\Sigma$. In the nice  work \cite{Micallef}, Micallef and Moraru unified the approach of the results proved in \cite{Cai}, \cite{Brendle}  and \cite{Nunes}. Finally, similar rigidity results have been obtained for closed stable MOTS  in initial data sets by  Galloway and  Mendes in \cite{mendes}, as well as by Mendes in \cite{abra}.
  
Motivated by these results, Máximo and Nunes \cite{mn} established a local rigidity result for the de Sitter-Schwarzschild space, involving strictly stable minimal surfaces and the Hawking mass. For this, it is important to recall the quasi-local, so-called Hawking mass of a compact surface $\Sigma^2\subset (M^3 ,g)$, which is defined as 
\begin{equation}\label{defimassH}
	m_{H}(\Sigma)=\sqrt{\dfrac{|\Sigma|}{16\pi }}\left(\frac{\chi(\Sigma) }{2}-\frac{1}{16\pi }\int_{\Sigma}\left(H^2+\frac{2}{3}\Lambda\right) d\sigma\right),
\end{equation}
where $\chi(\Sigma)$ is the Euler characteristics of $\Sigma$, $H$ is the mean curvature of $\Sigma$, $|\Sigma|$ is the area of $\Sigma$ and $\Lambda$ is the infimum of the scalar curvature. This  notion  of quasi-local mass plays a crucial role in the proof of the Riemannian Penrose inequality for assymptotically flat manifolds, discovered  independently by Huisken and Ilmanen \cite{ht} and Bray \cite{Bray}.

More precisely, Máximo and Nunes proved the following result.
\begin{theorem}[M\'aximo and Nunes]\label{mn}
Let $(M, g)$ be a Riemannian three-manifold with scalar curvature $R\geq 2$. If $\Sigma\subset M$ is an
embedded strictly stable minimal two-sphere which locally maximizes the Hawking mass, then the Gauss curvature of $\Sigma$ is constant, equal to $1/a^2$
for some $a\in (0, 1)$ and a neighborhood of $\Sigma$ in
$(M, g)$ is isometric to the de Sitter-Schwarzschild space $((-\varepsilon, \varepsilon)\times\Sigma, g_a)$ for some $\varepsilon>0$.
\end{theorem}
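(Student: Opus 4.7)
The plan is to exploit the strict stability of $\Sigma$ to produce a local constant mean curvature (CMC) foliation through $\Sigma$, and then to combine the maximality of the Hawking mass with the scalar curvature bound $R\geq 2$ to force pointwise rigidity. First, since $\Sigma$ is a strictly stable minimal two-sphere, the Jacobi operator $L = -\Delta - (|A|^{2} + \mathrm{Ric}(N,N))$ has positive first eigenvalue and is invertible, so the implicit function theorem produces, for each $t$ in a small interval, a unique normal graph $\Sigma_{t}$ over $\Sigma$ (with $\Sigma_{0}=\Sigma$) whose mean curvature $H(t)$ is constant on $\Sigma_{t}$ and whose graphing function has mean value $t$. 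Reparameterising normally, I would write the metric in a tubular neighbourhood as $g = \phi(t,\cdot)^{2}\,dt^{2} + g_{t}$, where $g_{t}$ is the induced metric on $\Sigma_{t}$ and $\phi>0$ is the lapse, with $\phi(0,\cdot)$ the positive function produced by the foliation construction.

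Next I would compute how $m_{H}(\Sigma_{t})$ evolves along this foliation. The key ingredients are the standard evolution identities
\[
\partial_{t}\,d\sigma_{t} = \phi\,H(t)\,d\sigma_{t}, \qquad \partial_{t}H(t) = -\phi^{-1}\Delta_{t}\phi - (|A_{t}|^{2}+\mathrm{Ric}(N,N))\,\phi,
\]
together with the Gauss equation $2K_{t} = R - 2\,\mathrm{Ric}(N,N) - |A_{t}|^{2} + H(t)^{2}$ and the Gauss--Bonnet identity $\int_{\Sigma_{t}} K_{t}\,d\sigma_{t} = 4\pi$, valid since each $\Sigma_{t}$ is a topological sphere. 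Substituting into the definition \eqref{defimassH} of $m_{H}$ with $\Lambda = 2$, integrating by parts against $\phi$, and using $\int_{\Sigma_t}\phi\,(\partial_t H(t))\,d\sigma_t$ evaluated via the identities above, I expect to obtain a one-sided differential inequality of the schematic form
\[
\frac{d}{dt} m_{H}(\Sigma_{t}) \;\geq\; C\sqrt{\frac{|\Sigma_{t}|}{16\pi}}\int_{\Sigma_{t}} \phi\Bigl(|\mathring{A}_{t}|^{2} + (R-2) + \tfrac{1}{2}H(t)^{2}\Bigr)\,d\sigma_{t},
\]
in which every term on the right is manifestly nonnegative under the hypotheses.

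Since $\Sigma$ maximises $m_{H}$ locally, the scalar function $t\mapsto m_{H}(\Sigma_{t})$ has a maximum at $t=0$, so the inequality above must collapse to an identity near $t=0$. Chasing the equality cases, each $\Sigma_{t}$ is totally geodesic with $H(t)\equiv 0$, $R\equiv 2$ along the foliation, and $\phi$ depends only on $t$; in particular the Gauss curvature $K_{t}$ is constant on each slice. The metric then decouples as $g = \phi(t)^{2}\,dt^{2} + g_{t}$ with $g_{t}$ a round metric whose area radius $r(t)$ satisfies, after using the Gauss equation and the constancy of $R$, a warped-product ODE system in $(\phi,r)$. Integrating this system with initial data determined by the Gauss curvature $1/a^{2}$ of $\Sigma$ identifies the neighbourhood with the de Sitter--Schwarzschild metric $g_{a}$, and the range $a\in(0,1)$ is dictated by the constraint that $\Sigma$ lies outside the cosmological horizon.

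The main obstacle I anticipate is the clean derivation of the monotonicity inequality for $m_{H}(\Sigma_{t})$: the lapse $\phi$ is not constant on $\Sigma_{t}$, so integrating the evolution equation for $H(t)$ against $\phi$ produces a $\int_{\Sigma_t}\phi^{-1}|\nabla_{t}\phi|^{2}\,d\sigma_{t}$ remainder after integration by parts that must be absorbed, and coordinating that term with the Hawking-mass prefactor $\sqrt{|\Sigma_{t}|/16\pi}$ and with the integrated Gauss equation requires careful bookkeeping. Once the inequality and its rigidity case are established, the identification with the de Sitter--Schwarzschild model is essentially an exercise in integrating a scalar ODE, and strict stability is used at the end to exclude the degenerate case in which the foliation collapses.
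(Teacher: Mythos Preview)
This theorem is quoted from \cite{mn} and not proved in the present paper; the paper's own Theorem~\ref{teoprincipal} is the free-boundary analogue, established by the same M\'aximo--Nunes strategy. Your architecture---CMC foliation from strict stability, monotonicity of $m_H$ along the foliation, rigidity from the equality case---matches that strategy, but the central computation is wrong in a way that derails the conclusion.

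Your schematic inequality places $\tfrac12 H(t)^2$ inside the nonnegative integrand, so equality would force $H(t)\equiv 0$ and each $\Sigma_t$ totally geodesic. That is incompatible with the target model: in de Sitter--Schwarzschild the slices $\{t\}\times\mathbb{S}^2$ are umbilic with $H(t)\neq 0$ for $t\neq 0$, so any monotonicity formula whose rigidity case yields a cylinder cannot be the correct one. What actually emerges (cf.\ \eqref{da} for the free-boundary version, which mirrors \cite{mn}) is a linear factor of $H(t)$ sitting \emph{outside} a bracket containing $(R-2)$, $|A_t|^2-\tfrac12 H(t)^2$, $|\nabla\rho_t|^2/\rho_t^2$, and a term $H'(t)\theta(t)\ge 0$. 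Consequently $\frac{d}{dt}m_H(\Sigma_t)$ is not one-signed; the missing step is $H'(0)=-\lambda_1(L_\Sigma)<0$ (see \eqref{sinalH}), which makes $H(t)$ change sign at $t=0$ and yields $\frac{d}{dt}m_H\le 0$ on $(-\varepsilon,0]$ and $\ge 0$ on $[0,\varepsilon)$. Thus $m_H(\Sigma_t)\ge m_H(\Sigma)$ on \emph{both} sides, and only then does local maximality force equality. In the equality case the leaves are umbilic (not totally geodesic), $R\equiv 2$, and $\rho_t$ is constant; integrating $\partial_t g_t=-H(t)g_t$ then gives the warped product with profile $u_a$ solving \eqref{edo}. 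Incidentally, the gradient term $\int\rho_t^{-2}|\nabla\rho_t|^2$ you flag as an obstacle lands inside the bracket with the favourable sign and needs no absorption.
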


In the presence of a nonempty boundary, the  objects of study are the  free boundary minimal surfaces. These surfaces arise as critical points of the area functional for surfaces in a three dimensional manifold $M$ with boundary in $\partial M$. It follows from the first variation area formula that such surfaces intersect $\partial M$ orthogonally.

In this context, Ambrozio \cite{lucas} established the following boundary version of the aforementioned rigidity results.
\begin{theorem}[Ambrozio]
Let $(M, g)$ be a Riemannian three-manifold with boundary $\partial M$. Assume that $R$ and $H^{\partial M}$ are bounded from below.
If $\Sigma$ is a properly immersed, two-sided,
free boundary stable minimal surface, then
$$I (\Sigma)=\frac{1}{2}{\rm inf} R|\Sigma|+{\rm inf} H^{\partial M} \leq 2\pi\chi(\Sigma).$$
Assume that $(M,g)$ has mean convex boundary and  $\Sigma$ is a properly embedded, two-sided, locally area-minimizing free boundary
surface such that $$I (\Sigma) = 2\pi\chi(\Sigma).$$ In addition, if one of the following hypotheses
holds:
\begin{itemize}
\item[(i)] each component of $\partial\Sigma$ is locally length-minimizing in $\partial M$; or
\item[(ii)]${\rm inf}H^{\partial M} = 0$;
\end{itemize}
then there exists a neighborhood of $\Sigma$ in $(M, g)$ that is isometric to $((-\varepsilon,\varepsilon)\times
\Sigma,dt^2+g_{\Sigma})$, where $(\Sigma, g_{\Sigma})$ has constant Gaussian curvature $\frac{1}{2}{\rm inf} R$ and $\partial\Sigma$ has constant geodesic curvature ${\rm inf} H^{\partial M}$ in $\Sigma$.
\end{theorem}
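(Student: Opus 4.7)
The proof splits into two parts: the inequality $I(\Sigma)\le 2\pi\chi(\Sigma)$ and the rigidity statement when equality is attained. For the inequality, my plan is to insert the test function $f\equiv 1$ into the second variation formula for a free boundary stable minimal surface,
\begin{equation*}
\int_\Sigma\bigl(|\nabla f|^2 - (\mathrm{Ric}(N,N)+|A|^2)f^2\bigr)\,d\sigma - \int_{\partial\Sigma} II^{\partial M}(N,N)\,f^2\,ds \ge 0,
\end{equation*}
and rewrite $\mathrm{Ric}(N,N)+|A|^2$ via the Gauss equation (valid in the minimal case) as $\tfrac12 R+\tfrac12|A|^2-K_\Sigma$. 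Along $\partial\Sigma$, the orthogonality imposed by the free boundary condition identifies the outward conormal of $\partial\Sigma$ in $\Sigma$ with the outward unit normal of $\partial M$, which yields $II^{\partial M}(N,N)=H^{\partial M}-k_g$, where $k_g$ is the geodesic curvature of $\partial\Sigma$ in $\Sigma$. Substituting and applying Gauss-Bonnet with boundary to cancel the $K_\Sigma$ and $k_g$ contributions gives
\begin{equation*}
\tfrac12\int_\Sigma R\,d\sigma + \tfrac12\int_\Sigma|A|^2\,d\sigma + \int_{\partial\Sigma}H^{\partial M}\,ds \;\le\; 2\pi\chi(\Sigma),
\end{equation*}
and the estimate $I(\Sigma)\le 2\pi\chi(\Sigma)$ follows on invoking the lower bounds for $R$ and $H^{\partial M}$.

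Now assume $I(\Sigma)=2\pi\chi(\Sigma)$. Reading off equality in every intermediate step forces $|A|\equiv 0$ on $\Sigma$, $R\equiv\inf R$ along $\Sigma$, $H^{\partial M}\equiv\inf H^{\partial M}$ along $\partial\Sigma$, and the constant function $1$ is a first eigenfunction of the Jacobi operator with the Robin-type free boundary condition. To globalize these pointwise identities, I would construct a smooth foliation $\{\Sigma_t\}_{t\in(-\varepsilon,\varepsilon)}$ of a tubular neighborhood of $\Sigma$ by properly embedded free boundary surfaces of constant mean curvature $\bar H(t)$, with $\Sigma_0=\Sigma$. Existence follows from an implicit function theorem argument in a H\"older space of normal graphs satisfying the orthogonality condition at $\partial M$; the linearization is Fredholm, and its one-dimensional kernel (spanned by the constant $1$) is removed by prescribing the mean of the graph. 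Writing the flow with positive lapse $f_t$, the first variation of area reads $\tfrac{d}{dt}|\Sigma_t| = -\bar H(t)\int_{\Sigma_t} f_t\,d\sigma_t$.

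The delicate step, and the one I expect to be the main obstacle, is to promote the foliation into a metric splitting. The idea is to test the Jacobi equation $\bar H'(t) = -\Delta f_t - (\mathrm{Ric}(N_t,N_t)+|A_t|^2)f_t$ against $1/f_t$, integrate over $\Sigma_t$, and use the free boundary condition $\partial_\nu f_t = II^{\partial M}(N_t,N_t)f_t$ together with Gauss-Bonnet and the identity $II^{\partial M}(N_t,N_t)=H^{\partial M}-k_{g,t}$ to obtain
\begin{equation*}
-\bar H'(t)\int_{\Sigma_t}\tfrac{1}{f_t}\,d\sigma_t \;\ge\; I(\Sigma_t) - 2\pi\chi(\Sigma_t) + \tfrac12\int_{\Sigma_t}\bigl(\bar H(t)^2+|A_t|^2\bigr)\,d\sigma_t.
\end{equation*}
Under hypothesis (ii), $\inf H^{\partial M}=0$ and the local area-minimizing property give $I(\Sigma_t)\ge I(\Sigma)=2\pi\chi(\Sigma_t)$, so $\bar H'(t)\le 0$; combined with $\bar H(0)=0$, this forces $\bar H(t)$ and $t$ to have opposite signs, and the local area-minimizing hypothesis then forces $\bar H(t)\equiv 0$ as in Bray-Brendle-Neves and Nunes. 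Under hypothesis (i), each component of $\partial\Sigma_t$ competes against a locally length-minimizing curve in $\partial M$, giving $|\partial\Sigma_t|\ge|\partial\Sigma|$, and hence again $I(\Sigma_t)\ge I(\Sigma)$ and $\bar H(t)\equiv 0$. Once every leaf is free boundary minimal, the equality case of the already-proved inequality applied to each $\Sigma_t$ forces it to be totally geodesic with $R$ and $H^{\partial M}$ attaining their infima throughout the neighborhood; a direct computation then shows that $f_t$ is constant on each leaf, and after reparameterizing by arc-length along the normal geodesics the metric takes the product form $dt^2+g_\Sigma$, completing the proof.
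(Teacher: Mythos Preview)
The paper does not contain a proof of this statement: the theorem is quoted in the introduction as Ambrozio's result (reference \cite{lucas}) and is used only as motivation and context for the paper's own Theorem~\ref{teoprincipal}. There is therefore no in-paper proof to compare your proposal against.

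That said, your outline follows the strategy of Ambrozio's original argument faithfully: the inequality via the test function $1$, Gauss equation, the identity $H^{\partial M}=k_g+\Pi(N,N)$, and Gauss--Bonnet; then a CMC free boundary foliation built by the implicit function theorem (with the constant function spanning the kernel of the linearized operator, removed by the zero-mean constraint); then the $1/f_t$ integration to obtain a differential inequality for $\bar H(t)$; and finally the area comparison to force $\bar H\equiv 0$ and the product splitting. This is the correct architecture, and it is also the template the present paper adapts in its Propositions~\ref{locally} and~\ref{cmc} and in the proof of Theorem~\ref{teoprincipal}.

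One point in your rigidity sketch deserves more care. Under hypothesis~(ii) you write that ``the local area-minimizing property gives $I(\Sigma_t)\ge I(\Sigma)$''. When $\inf R<0$ (which is allowed, and corresponds to $\chi(\Sigma)<0$), the inequality $|\Sigma_t|\ge|\Sigma|$ pushes $\tfrac12(\inf R)\,|\Sigma_t|$ in the \emph{wrong} direction, so the comparison $I(\Sigma_t)\ge I(\Sigma)$ does not follow from area-minimizing alone. In Ambrozio's argument (and in the analogous closed-surface arguments of Bray--Brendle--Neves and Nunes unified by Micallef--Moraru), the sign analysis is arranged differently: one combines the differential inequality for $\bar H'$ with the first variation of area to rule out any $t$ with $\bar H(t)\ne 0$, splitting into cases according to the sign of $\inf R$ rather than asserting $I(\Sigma_t)\ge I(\Sigma)$ uniformly. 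Your hypothesis~(i) branch is handled correctly in spirit, but the same caveat applies there when $\inf H^{\partial M}<0$. The remaining steps (totally geodesic leaves, constancy of $f_t$, and the product metric) are fine.
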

A similar result was obtained by Alaee, Lesourd and Yau in \cite{ala}, in the context of free boundary marginally outer trapped surfaces in initial data sets.

Our main goal is to  present a connection between free boundary minimal surfaces and General Relativity using the modified Hawking mass.  We recall that the modified Hawking mass of a compact  free boundary $\Sigma^2$ in a Riemannian three-manifold $(M^3, g)$ is  defined to be
\begin{equation}\label{defimass}
	\widetilde{m}_{H}(\Sigma)=\sqrt{\dfrac{|\Sigma|}{8\pi }}\left(\chi(\Sigma)-\frac{1}{8\pi }\int_{\Sigma}\left(H^2+\frac{2}{3}\Lambda\right) d\sigma\right),
\end{equation}
where where $\chi(\Sigma)$ is the Euler characteristics of $\Sigma$, $H$ is the mean curvature of $\Sigma$, $|\Sigma|$ is the area of $\Sigma$ and $\Lambda$ is the infimum of the scalar curvature. This notion was introduced by Marquardt in \cite{Mar} to study weak solutions  of the free boundary inverse mean curvature flow, and played an important role in the proof of the Riemannian Penrose inequality for asymptotically flat manifolds with a non-compact boundary given by Koerber in \cite{Koe}.

Before we proceed, let us provide a precise definition of model space of this present paper, namely half de Sitter-Schwarzschild manifold which was first introduced by de Lima; see Remark 4.8 of \cite{dL}. Fix $0<m<\frac{1}{3\sqrt{3}}$, the half de Sitter-Schwarzschild metric with mass parameter $m$ and scalar curvature equal to $2$ is defined as the metric
\begin{equation}
g_{m}=\frac{1}{1-\frac{r^2}{3}-\frac{2m}{r}}dr^2+r^2g_{\mathbb{S}_{+}^2},
\end{equation}
define on $(r_-(m),r_+(m))\times \mathbb{S}_{+}^2$, where $0 < r_{-}(m) < r_{+}(m) < 1 $ are the only positive solution of $f_m(r) = 1 -r^2 -2mr^{-1}$, $f_m(r)>0$ for all $r\in(r_-(m),r_+(m))$
and $g_{\mathbb{S}_{+}^2}$ is the standard metric on $\mathbb{S}_{+}^2$ with constant Gauss curvature equal to $1$. Additionally, this manifold  carries a totally geodesic inner boundary $(r_-(m),r_+(m))\times \mathbb S^1$.

Through a change of variable, the half de Sitter-Schwarzschild   metric can be rewritten as
\begin{equation}\label{metric}
g_{m} = ds^2 + u(s)^2g_{\mathbb{S}_{+}^2},\ \ \  {\rm on}\ \ \  [a, b]\times\mathbb{S}^2,
\end{equation}
where $u: (a, b)\rightarrow (r_-(m),r_+(m))$ is a function that extends continuously to $[a, b]$ with $u(a) = r_-(m)$, $u(b) = r_+(m)$ and $\frac{ds}{dr}=f_m(r)^{-1/2}>0$ on $(r_-(m),r_+(m))$.

After reflection of the metric $g_{m}$, we can define a
complete periodic rotationally symmetric metric  on $\mathbb{R}\times\mathbb{S}_{+}^2$ with 
scalar curvature to $R=2$ and totally geodesic boundary $\mathbb{R}\times\mathbb{S}^1$. Moreover, the function $u$ solves the following second-order nonlinear
differential equation
\begin{equation}\label{edo}
u''(s)=\frac{1}{2}\Big(\frac{1-u'(s)^2}{u(s)}\Big)-\frac{u(s)}{2}.
\end{equation}

Moreover, any positive solutions $u(r)$ to \eqref{edo} that is defined for all  $r\in\mathbb{R}$,  define a periodic rotationally symmetric
metric $g_a = dr^2 + u_a(r)^2g_{\mathbb{S}_+^{2}}$ with constant scalar curvature equal to $2$,
where $a\in (0, 1)$ and $u_a(r)$ satisfies $u_a(0) = a = \min u$ and $u_a'(0) = 0$; we refer to [\cite{nthesis}, Section 2.1] for  details. This
metric is precisely the half de Sitter-Schwarzschild metrics on $\mathbb{R}\times\mathbb{S}_+^{2}$ defined
above.

In our first result we show that slices of the half de Sitter-Schwarzschild are local maxima in the following sense:
\begin{theorem}\label{grafico}
Let $\Sigma_r = \{r\}\times\mathbb{S}_{+}^2$ be a slice of the half de 
Sitter–Schwarzschild manifold $(\mathbb{R}\times\mathbb{S}_{+}^2, g_m)$. 
Then there exists an $\varepsilon=\varepsilon (r)>0$ such that if 
$\Sigma\subset
\mathbb{R}\times\mathbb{S}_{+}^2$ is a free boundary properly embedded 
two-disk, which is a normal graph over $\Sigma_r$ given by $\phi\in C^{2}(\Sigma_r)$ with $||\phi||_{C^{2}(\Sigma_r)}<\varepsilon$, one has
\begin{itemize}
\item[(i)] either $\widetilde{m}_H(\Sigma) < \widetilde{m}_H(\Sigma_r)$;
\item[(ii)] or $\Sigma$ is a slice $\Sigma_s$ for some $s$.
\end{itemize}
\end{theorem}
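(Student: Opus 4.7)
The plan is to prove a second-order Taylor expansion of $\widetilde m_H(\Sigma_\phi)$ around the slice $\Sigma_r$, showing that the constant-function direction gives alternative (ii) while all other admissible perturbations strictly decrease the modified Hawking mass. A direct calculation from \eqref{defimass}, using $\chi(\Sigma_s)=1$, $|\Sigma_s|=2\pi u(s)^2$, $H_{\Sigma_s}=2u'(s)/u(s)$, $\Lambda=2$, and the first integral $u'(s)^2+u(s)^2/3+2m/u(s)=1$ of \eqref{edo}, yields $\widetilde m_H(\Sigma_s)=m$ for every admissible slice. This already handles alternative (ii) and shows that the slice family is a flat one-parameter family for $\widetilde m_H$.

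In the warped coordinates $(s,x)$ of $g_m=ds^2+u(s)^2 g_{\mathbb S_+^2}$ I would parametrize $\Sigma_\phi=\{(r+\phi(x),x):x\in\mathbb S_+^2\}$, verify that the free boundary condition along the totally geodesic boundary $\mathbb R\times\partial\mathbb S_+^2$ is equivalent to the Neumann condition $\partial_\eta\phi=0$ on the equator, and expand the area, second fundamental form, and mean curvature to second order in $\phi$ via standard warped-product formulas. Writing $\phi=c+\psi$ with $\int_{\mathbb S_+^2}\psi\, d\sigma_{\mathbb S_+^2}=0$, the constant $c$ translates $\Sigma_r$ to $\Sigma_{r+c}$ and contributes nothing to $\widetilde m_H(\Sigma_\phi)-m$ at any order (cross-terms between $c$ and $\psi$ likewise vanish because the first variation at every nearby slice is identically zero after integration by parts against the Neumann condition). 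The problem then reduces to showing that a certain quadratic form $Q(\psi)$ on the zero-mean Neumann class on $\mathbb S_+^2$ is strictly negative off zero.

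Collapsing $Q(\psi)$ to Rayleigh form on $(\mathbb S_+^2,g_{\mathbb S_+^2})$ is the heart of the argument: using the Gauss equation $K_\Sigma=R/2-\mathrm{Ric}(\nu,\nu)+\det A$, Gauss--Bonnet $2\pi\chi=\int_\Sigma K+\int_{\partial\Sigma}k_g$ (with the boundary-geodesic-curvature contribution handled through the orthogonality condition and the total geodesicity of $\partial M$), and the ODE \eqref{edo} applied to the pointwise coefficients involving $H$, $|A|^2$, and $\mathrm{Ric}(\partial_s,\partial_s)=-2u''/u$, the form $Q$ should reduce, up to a positive $r$-dependent prefactor, to $\int_{\mathbb S_+^2}\bigl(\alpha(r)\psi^2-|\nabla\psi|^2\bigr) d\sigma_{\mathbb S_+^2}$ with $\alpha(r)$ strictly below the lowest nonzero Neumann eigenvalue of $-\Delta_{\mathbb S_+^2}$ (which equals $2$, realized by the restrictions of the $\ell=1$ spherical harmonics satisfying Neumann data at the equator). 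Poincar\'e's inequality on the hemisphere then forces $Q(\psi)<0$ for $\psi\not\equiv 0$, and Taylor's theorem with remainder, combined with the uniform strictness of this negative definiteness throughout a small enough $C^2$-neighborhood, upgrades the infinitesimal statement to alternative (i). The main obstacle is the identity-juggling required to reach this clean Rayleigh form for $Q$ and to verify the sign of $\alpha(r)-2$ in the admissible range for $r$; both hinge on systematically exploiting the ODE \eqref{edo} for the profile function $u$.
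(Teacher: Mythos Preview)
Your approach is genuinely different from the paper's and, while the overall strategy is sound, the description of the quadratic form $Q$ contains an error.

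The paper's proof is a two-line reduction: since the boundary $\mathbb{R}\times\mathbb{S}^1$ is totally geodesic, one doubles $(M,g_m)$ to obtain the full de Sitter--Schwarzschild $(\mathbb{R}\times\mathbb{S}^2,\widetilde g_m)$, and the free boundary condition ensures that the double $\widetilde\Sigma$ of $\Sigma$ is a $C^2$ graph over the full slice $\widetilde\Sigma_r$. One checks $m_H(\widetilde\Sigma)=2\,\widetilde m_H(\Sigma)$ and then simply cites Theorem~1.2 of M\'aximo--Nunes \cite{mn} for the closed case. No new computation is needed.

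Your direct second-variation route is in principle workable---it is essentially what M\'aximo--Nunes themselves do on the closed sphere---but the form you write down for $Q$ is wrong. The second variation of $\widetilde m_H$ (Proposition~\ref{segundavaria}) contains the term $-\tfrac{2|\Sigma|^{1/2}}{(8\pi)^{3/2}}\int_\Sigma (L_\Sigma\phi)^2\,d\sigma$, which is \emph{fourth}-order in $\phi$. On a slice, $L_{\Sigma_r}=\Delta_{\Sigma_r}+c(r)$ with $c(r)$ constant, so after diagonalizing over Neumann eigenfunctions on $\mathbb{S}_+^2$ the contribution of the $k$-th mode is a \emph{quadratic} polynomial $P_r(\mu_k)$ in the eigenvalue $\mu_k$, not the linear expression $\alpha(r)-\mu_k$ you propose. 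The correct endgame is to verify $P_r(\mu)<0$ for all $\mu\ge 2$ (the first nonzero Neumann eigenvalue, as you correctly identify), which requires controlling both roots of $P_r$; this is exactly the computation carried out in \cite{mn}, and the ODE \eqref{edo} is indeed what makes it close. So your plan can be repaired, but it amounts to reproducing the M\'aximo--Nunes argument with Neumann replacing closed spectral data, whereas the doubling trick imports their result wholesale.
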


One should point out that  the problem of investigating the rigidity/flexibility of the  half de Sitter-Schwarzschild manifolds appeared in the good survey due to de Lima [Remark 4.8, \cite{dL}].

In our next result, we establish  the following local rigidity theorem: 
\begin{theorem}\label{teoprincipal}
Let $(M, g)$ be a Riemannian three-manifold with boundary $\partial M$ and satisfies  $R\geq 2$ and $H^{\partial M}\geq 0$.  If $\Sigma$ is a properly
embedded, two-sided, free boundary strictly stable minimal two-disk which locally maximizes the modified Hawking mass, then the Gauss curvature of $\Sigma$ is constant equal to $1/a^2$
for some $a\in (0, 1)$, the geodesic curvature of $\partial\Sigma$ vanishes and a neighborhood of $\Sigma$ in
$(M, g)$ is isometric to the half de Sitter-Schwarzschild space $((-\varepsilon, \varepsilon)\times\Sigma, g_a)$ for
some $\varepsilon>0$.
\end{theorem}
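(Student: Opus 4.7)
The strategy is to imitate, in the free boundary setting, the monotone mass argument of M\'aximo--Nunes (closed case), using Ambrozio's framework for boundary contributions.

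\textbf{Step 1: CMC foliation near $\Sigma$.} Strict stability of $\Sigma$, combined with the implicit function theorem applied to the mean-curvature operator under the free boundary Neumann-type condition $\partial_\eta\phi=h^{\partial M}(\nu,\nu)\phi$, produces a smooth family $\{\Sigma_t\}_{|t|<\varepsilon}$ of properly embedded, free boundary surfaces foliating a neighborhood $U$ of $\Sigma$ in $M$: each $\Sigma_t$ has constant mean curvature $H(t)$, $\Sigma_0=\Sigma$, $H(0)=0$, and the lapse $\rho_t=\langle\partial_tF_t,\nu_t\rangle>0$ satisfies $L\rho_t=-H'(t)$ on $\Sigma_t$ with the Robin condition $\partial_\eta\rho_t=h^{\partial M}(\nu_t,\nu_t)\rho_t$ on $\partial\Sigma_t$. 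Multiplying by $\rho_0$ and integrating yields $H'(0)\int_\Sigma\rho_0\,d\sigma=Q(\rho_0,\rho_0)>0$, so $H'(0)>0$ and $H$ changes sign transversally at $t=0$.

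\textbf{Step 2: Variational identity.} The first variation of area gives $A'(t)=H(t)\int_{\Sigma_t}\rho_t\,d\sigma$, while integrating $L\rho_t=-H'(t)$ with the Robin condition and using $h^{\partial M}(\nu_t,\nu_t)=H^{\partial M}-\kappa_t$ along the free boundary (where $\kappa_t$ is the geodesic curvature of $\partial\Sigma_t$ in $\Sigma_t$) yields
\[
-H'(t)A(t)=\int_{\partial\Sigma_t}(H^{\partial M}-\kappa_t)\rho_t\,ds+\int_{\Sigma_t}(|A_t|^2+\mathrm{Ric}(\nu_t,\nu_t))\rho_t\,d\sigma.
\]
Combining the Gauss equation $|A_t|^2+\mathrm{Ric}(\nu_t,\nu_t)=\tfrac12(|A_t|^2+H(t)^2)+\tfrac12R-K_t$, the pointwise inequalities $R\geq2$, $|A_t|^2\geq\tfrac12H(t)^2$, $H^{\partial M}\geq0$, and the disk Gauss--Bonnet identity $\int_{\Sigma_t}K_t\,d\sigma+\int_{\partial\Sigma_t}\kappa_t\,ds=2\pi$ (with $\chi(\Sigma_t)=1$), one obtains a differential inequality between $\tfrac{d}{dt}\widetilde m_H(\Sigma_t)$ and $H(t)$ that is compatible with a local maximum of $\widetilde m_H$ at $t=0$ only when all of the underlying pointwise inequalities are equalities.

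\textbf{Step 3: Rigidity from the local maximum.} Feeding the local-maximum hypothesis into Step 2 forces $\widetilde m_H(\Sigma_t)\equiv\widetilde m_H(\Sigma)$ on $(-\varepsilon,\varepsilon)$, and hence every inequality used must be saturated pointwise on $U$. This gives: (i) $R\equiv2$ on $U$; (ii) each leaf $\Sigma_t$ is totally umbilical with $|A_t|^2\equiv\tfrac12 H(t)^2$, so $\Sigma_0$ is in particular totally geodesic; (iii) $H^{\partial M}\equiv0$ on $\partial M\cap U$ and $\kappa_t\equiv0$ along every $\partial\Sigma_t$; and (iv) $\rho_t$ is constant on each leaf. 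From (iv) the foliation is orthogonal, yielding $g|_U=dt^2+u(t)^2 g_{\Sigma_0}$ for some positive function $u$, while (ii), (iii) and Gauss--Bonnet together force $(\Sigma_0,g_{\Sigma_0})$ to have constant Gauss curvature $1/a^2$ for some $a\in(0,1)$.

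\textbf{Step 4: Identification with the model.} Imposing $R\equiv2$ on the warped product $g=dt^2+u(t)^2 g_{\Sigma_0}$ reduces to the ODE \eqref{edo} for $u$; uniqueness of solutions with $u(0)=a$, $u'(0)=0$ identifies the local metric on $U$ with the half de Sitter--Schwarzschild metric $g_a$, completing the proof.

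The main technical difficulty is Step 2. Unlike in the closed case, the free boundary contributes the extra term $\int_{\partial\Sigma_t}(H^{\partial M}-\kappa_t)\rho_t\,ds$, which must be reconciled with the disk Gauss--Bonnet formula, whose Euler contribution $2\pi$ is split between $\int K_t$ and $\int\kappa_t$. Extracting from this combination a signed monotonicity statement that cleanly captures both the interior and boundary equality cases requires a careful choice of CMC parametrization together with effective use of the Robin condition on $\rho_t$, features absent in the closed sphere setting of M\'aximo--Nunes.
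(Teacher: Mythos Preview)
Your outline follows the paper's strategy, but Step~2 has a genuine gap that propagates into Step~3(iv). When you integrate $L\rho_t=-H'(t)$ over $\Sigma_t$, every term on the right carries a factor of $\rho_t$: you obtain $\int_{\Sigma_t}(|A_t|^2+\mathrm{Ric}(\nu_t,\nu_t))\rho_t\,d\sigma$ and $\int_{\partial\Sigma_t}(H^{\partial M}-\kappa_t)\rho_t\,ds$. But the Gauss--Bonnet identity you then invoke is the \emph{unweighted} one, $\int_{\Sigma_t}K_t\,d\sigma+\int_{\partial\Sigma_t}\kappa_t\,ds=2\pi$, and the modified Hawking mass itself is built from the unweighted area and $\int H^2\,d\sigma$. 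With $\rho_t$ a priori nonconstant, you cannot pass from the $\rho_t$-weighted quantities to the unweighted ones, so the claimed differential inequality for $\tfrac{d}{dt}\widetilde m_H(\Sigma_t)$ does not follow from what you have written. This is exactly the ``careful choice'' you allude to at the end, but it is not a matter of parametrization: it is the missing analytic step.

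The paper resolves this by dividing the equation $H'(t)=L_{\Sigma_t}\rho_t$ by $\rho_t$ before integrating (Lemma~\ref{stablecmc}). This produces the unweighted integral $\int_{\Sigma_t}(\mathrm{Ric}(N_t,N_t)+|A_t|^2)\,d\sigma_t$, together with two new signed terms: a gradient term $\overline{\rho_t}\int_{\Sigma_t}|\nabla\rho_t|^2/\rho_t^2\,d\sigma_t\ge 0$, and a Cauchy--Schwarz defect $H'(t)\theta(t)$ with $\theta(t)=|\Sigma_t|-\overline{\rho_t}\int_{\Sigma_t}\rho_t^{-1}\,d\sigma_t\le 0$. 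After combining with Gauss--Bonnet (now legitimately, since the integrals are unweighted) and the sign of $H(t)H'(t)$, these extra terms contribute with the correct sign to the monotonicity of $\widetilde m_H(\Sigma_t)$. Crucially, they are also what forces $\rho_t$ to be constant in the equality case: saturation kills $\int|\nabla\rho_t|^2/\rho_t^2$ and $\theta(t)$, giving $\rho_t\equiv\overline{\rho_t}$. Your Step~3(iv) asserts this constancy but, without the $1/\rho_t$ trick, there is nothing in your inequalities that detects the oscillation of $\rho_t$, so the conclusion is unsupported. Once this lemma is in place, the rest of your outline matches the paper.
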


We point out that Thoerem \ref{teoprincipal} is a free boundary version of the results due to Máximo and Nunes \cite{mn}. Furthermore, a relevant observation is that the metric $g_m$ defined in \eqref{metric} converges to the standard product metric $dr^2 + g_{\mathbb{S}_{+}^2}$ on $\mathbb{R}\times\mathbb{S}_{+}^2$ when $a\rightarrow 1$. Moreover, one can easily verify that $\Sigma_0 =\{0\}\times \mathbb{S}_{+}^2$ is a free boundary strictly stable  minimal (in fact, totally geodesic) two-disk of area $2\pi a^2$ in $(\mathbb{R}\times\mathbb{S}_{+}^{2}, g_m)$, for each $a\in (0, 1)$. However, in the standard product metric $dr^2 + g_{\mathbb{S}_{+}^2}$, that is, in the limit as $a\rightarrow 1$, $\Sigma_0$ is only stable and not strictly so. Consequently, our theorem provides, in some sense, a generalization of the theorem due to Ambrozio \cite{lucas}.

The outline of the paper is as follows: In Section \ref{sec2}, we review some key aspects of the stability of free boundary minimal surfaces and derive the variation formulae of the modified Hawking mass. In Section \ref{sec3}, inspired by arguments in \cite{lucas}, \cite{mn} we use the strictly stability and the locally maximizes of the modified Hawking mass to construct a foliation of $M$ in a neighborhood of $\Sigma$ by constant mean curvature (CMC) embedded free boundary  surfaces. Finally, in Section \ref{sec4} we prove Theorems \ref{grafico} and \ref{teoprincipal}.

\section{Preliminaries}\label{sec2}
During this section we will gather some basic facts and key lemmas that will be useful for establishment of the main results. Let $(M,g)$ be a Riemannian three-manifold with nonempty boundary $\partial M$. Denote by $R$ the scalar curvature of $M$ and  denote  the mean curvature of $\partial M$ by $H^{\partial M}$. Let $\Sigma^2$ be a smooth compact manifold with nonempty boundary, and suppose  that $\Sigma$ is properly embedded into $M$, that is, $\Sigma$ is  embedded into $M^3$ and $\partial\Sigma=\Sigma\cap\partial M$.

Moreover, we assume that $\Sigma$ is two-sided, that is, there exists a unit vector field $N$ along $\Sigma$ that is normal to $\Sigma$. Fix a unit normal vector field $X$ outward pointing unit normal for $\partial M$, and let $\nu$ be the outward pointing conormal along $\partial\Sigma$ in $\Sigma$. Let $A(U,V) = g(-\nabla_U N,V)$ be the second fundamental form  of $\Sigma$, and let $\Pi(u,v)=g(\nabla_uX,v)$ be the second fundamental form of $\partial M$ with respect to $-X$. We say that $\Sigma$ is free boundary if $\Sigma$ meets $\partial M$ orthogonally. In other words, $\Sigma$ is free boundary if $\nu = X$ along $\partial\Sigma$.

Let $f:\Sigma\times(-\varepsilon,\varepsilon)\rightarrow M$ be a properly smooth normal variation of $\Sigma$, that is, $f$ is a smooth map such that, 
\begin{itemize}
\item for every $t\in(-\varepsilon,\varepsilon)$, the map $ f_t =f(\cdot,t):\Sigma\rightarrow M$ is a properly embedded into $M$;
\item $f(x,0)=x$ for every $x\in\Sigma$;
\item $\frac{\partial f}{\partial t} (x,0) = \phi(x)N(x)$ for each $x\in\Sigma$, where $\phi\in C^{\infty}(\Sigma)$.
\end{itemize}

We now recall some well-knowns  evolution equations of relevant geometric quantities. For this purpose, we will use the subscript $t$ to denote quantities associated to $\Sigma_t = f_t(\Sigma)$. More precisely, $N_t$ will denote a local unit vector field normal to $\Sigma_t$, $H_t$ the mean curvature of $\Sigma_t$, $\nu_t$ the outward pointing conormal along $\partial\Sigma_t$ and $\phi_t$ is the function on $\Sigma_t$ defined by $\phi_t = g(\frac{\partial f}{\partial t},N_t)$.

First, we recall the variation of the mean curvature
\begin{equation}\label{varimean}
	\left\{\begin{array}{ll}\frac{d}{dt}H(t)\Big{|}_{t=0}=L_{\Sigma}\phi \ \ on\ \ \Sigma,\\\frac{d}{dt}g(N_t, X)\Big{|}_{t=0}=-\frac{\partial\phi}{\partial\nu}+\Pi(N,N)\phi\ \ along\ \  \partial\Sigma,\end{array}\right.
\end{equation}
where $L_{\Sigma}=\Delta_{\Sigma}+Ric(N,N)+|A|^2$ is the Jacobi operator of $\Sigma$, $Ric$ is the Ricci tensor of $M$ and $\Delta_{\Sigma}$ is the Laplace operator of $\Sigma$ with respect to the induced metric from $M$.

In particular, if each $\Sigma_t$ is a constant mean curvature free boundary surface, then
\begin{equation}\label{varimeanCMC}
	\left\{\begin{array}{ll}\frac{d}{dt}H(t)=L_{\Sigma_t}\phi_t \ \ {\rm on}\ \ \Sigma,\\\frac{\partial\phi_t}{\partial\nu_t}=\Pi(N_t,N_t)\phi_t\ \ {\rm along}\ \  \partial\Sigma.\end{array}\right.
\end{equation}

The first variation of area is given by
\begin{equation}\label{1area}
\frac{d}{dt}|\Sigma_t|\Big{|}_{t=0}=-\int_{\Sigma}H\phi d\sigma+\int_{\partial\Sigma}\phi g(N, \nu)ds,
\end{equation}
where  $H = tr A$ is the mean curvature of $\Sigma$ in $M$. From \eqref{1area}, it follows that $\Sigma$ is a critical point for the area functional if and only if $\Sigma$ is minimal with free boundary.  For more details see \cite{lucas}.

If $\Sigma$ is minimal with free boundary, then the second variation of area is given by
\begin{equation}\label{2area}
\frac{d^2}{dt^2}|\Sigma_t|\Big{|}_{t=0}=-\int_{\Sigma}\phi L_{\Sigma}\phi d\sigma +\int_{\partial\Sigma}\Big(\frac{\partial\phi}{\partial\nu}-\Pi(N,N)\phi\Big)\phi ds.
\end{equation}
Alternatively, in terms of the quadratic form this can be written
\begin{equation*}
\frac{d^2}{dt^2}|\Sigma_t|\Big{|}_{t=0}= \mathcal{Q}(\phi,\phi),
\end{equation*}
where the quadratic form $\mathcal{Q}:C^{\infty}(\Sigma)\times C^{\infty}(\Sigma)\rightarrow \mathbb{R}$ is called the index form of $\Sigma$ given by
\begin{equation}\label{index}
\mathcal{Q}(\phi,\psi)=-\int_{\Sigma}\phi(\Delta_{\Sigma}\psi+(Ric(N,N)+|A|^2)\psi) d\sigma +\int_{\partial\Sigma}\phi\Big(\frac{\partial\psi}{\partial\nu}-\Pi(N,N)\psi\Big) ds.
\end{equation}
The boundary Robin condition
$$\frac{\partial\phi}{\partial\nu}=\Pi(N,N)\phi$$
is an elliptic boundary condition for $L_{\Sigma}$, therefore there exists a non-decreasing and diverging
sequence $\lambda_1\leq\lambda_2\leq\cdots\leq\lambda_k\nearrow\infty$ of eigenvalues associated to a $L^2(M, d\sigma)$ orthonormal basis of solutions to the eigenvalue problem
\begin{equation}\label{eingevalue}
\left\{\begin{array}{ll}L_{\Sigma}\phi+\lambda\phi=0\ \ {\rm on}\ \ \Sigma,\\\frac{\partial\phi}{\partial \nu}=\Pi(N,N)\phi\ \ {\rm along}\ \  \partial\Sigma.\end{array}\right.
\end{equation}

Then $\Sigma$ is
called stable if and only if $\mathcal{Q}(\phi,\phi)\geq 0$ for all $\phi\in C^{\infty}(\Sigma)$, where the lowest
eigenvalue of $L_{\Sigma}$ is nonnegative. If the lowest eigenvalue is positive we say the
surface is strictly stable. Therefore, if $\phi$ is an eigenfunction of the Jacobi operator associated
with that first eigenvalue, that is, $\mathcal{Q}(\phi,\phi)=\lambda_1(L_{\Sigma})\int_{\Sigma}\phi^2d\sigma$, then
\begin{equation}\label{strictly}
\lambda_1(L_{\Sigma})\int_{\Sigma}\phi^2d\sigma =-\int_{\Sigma}\phi(\Delta_{\Sigma}\phi+(Ric(N,N)+|A|^2)\phi)d\sigma.
\end{equation} 

Next, we derive first and the second variation formulae for the modified Hawking mass.
\begin{proposition}\label{1variation}
Let $(M, g)$ be a Riemannian three-manifold with boundary $\partial M$ and scalar curvature $R$ bounded below.  Let $\Sigma$ be a properly
embedded, two-sided, free boundary surface. For a given normal variation as in the discussions above, we have
{\setlength\arraycolsep{2pt}
\begin{eqnarray}
		\left. \frac{d}{dt}\widetilde{m}_{H}(\Sigma_{t})\right|_{t=0}&=&-\frac{2|\Sigma|^{-\frac{1}{2}}}{(8\pi )^{\frac{3}{2}}}\int_{\Sigma}H\Delta_{\Sigma}\phi d\sigma+\frac{|\Sigma|^{\frac{1}{2}}}{(8\pi )^{\frac{3}{2}}}\int_{\Sigma}(\Lambda-R)H\phi d\sigma\nonumber \\
		&+&\frac{|\Sigma|^{\frac{1}{2}}}{(8\pi)^{\frac{3}{2}}}\int_{\Sigma}\left[2K_{\Sigma}-\frac{4\pi\chi(\Sigma)}{|\Sigma|}+\frac{1}{2|\Sigma|}\int_{\Sigma}H^2 d\sigma-|A|^2\right]H\phi d\sigma, \nonumber
\end{eqnarray} }
where $\Lambda=\inf R$ and $K_{\Sigma}$ is the Gaussian curvature of $\Sigma$.
\end{proposition}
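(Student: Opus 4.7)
The plan is to differentiate the product expression defining $\widetilde{m}_H(\Sigma_t)$ directly, using the known first variation formulae for area and mean curvature. Since the topology of $\Sigma_t$ is constant for small $t$, we have $\chi(\Sigma_t)=\chi(\Sigma)$, and I only need to track two time-dependent pieces: the area $|\Sigma_t|$ and the integral $F(t) := \int_{\Sigma_t}(H_t^2+\tfrac{2}{3}\Lambda)\,d\sigma_t$. Writing
\[
\widetilde{m}_H(\Sigma_t)=\frac{|\Sigma_t|^{1/2}}{(8\pi)^{1/2}}\Bigl(\chi(\Sigma)-\frac{F(t)}{8\pi}\Bigr),
\]
the product rule gives two kinds of terms: one proportional to $|\Sigma_t|'(0)$ times the bracket, and one proportional to $|\Sigma|^{1/2}F'(0)$.

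For $|\Sigma_t|'(0)$ I invoke the first variation of area \eqref{1area}. The boundary term $\int_{\partial\Sigma}\phi\,g(N,\nu)\,ds$ is identically zero here: $\nu$ is tangent to $\Sigma$ while $N$ is normal to $\Sigma$, so $g(N,\nu)=0$ (the free boundary hypothesis also guarantees the normal variation $\phi N$ preserves proper embedding, since $N$ is tangent to $\partial M$ along $\partial\Sigma$). Thus $|\Sigma_t|'(0)=-\int_{\Sigma}H\phi\,d\sigma$. For $F'(0)$ I split
\[
F'(0)=\int_{\Sigma}2H\,\dot H\,d\sigma+\int_{\Sigma}\Bigl(H^2+\tfrac{2}{3}\Lambda\Bigr)\frac{d}{dt}\Big|_{t=0}d\sigma_t,
\]
substitute $\dot H=L_{\Sigma}\phi=\Delta_{\Sigma}\phi+(\mathrm{Ric}(N,N)+|A|^2)\phi$ from \eqref{varimean}, and $\frac{d}{dt}|_{t=0}d\sigma_t=-H\phi\,d\sigma$.

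The essential algebraic step is to eliminate the ambient quantity $\mathrm{Ric}(N,N)$ in favor of intrinsic data of $\Sigma$ and the ambient scalar curvature $R$. For a two-sided surface in a three-manifold, the Gauss equation gives
\[
2K_{\Sigma}=R-2\,\mathrm{Ric}(N,N)+H^2-|A|^2,
\]
so $\mathrm{Ric}(N,N)+|A|^2=\tfrac{1}{2}R-K_{\Sigma}+\tfrac{1}{2}H^2+\tfrac{1}{2}|A|^2$. Substituting this into the $2H(\mathrm{Ric}(N,N)+|A|^2)\phi$ term and subtracting the $H\phi(H^2+\tfrac{2}{3}\Lambda)$ contribution from the area-element derivative, the $H^2$ and $|A|^2$ pieces recombine to produce exactly the $(\Lambda-R)H\phi$ and $\bigl[2K_{\Sigma}-|A|^2\bigr]H\phi$ terms in the statement.

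The remaining contributions $-\frac{4\pi\chi(\Sigma)}{|\Sigma|}H\phi$ and $\frac{1}{2|\Sigma|}\bigl(\int_{\Sigma}H^2\,d\sigma\bigr)H\phi$ emerge when one rewrites the first product-rule term $\frac{|\Sigma|^{-1/2}|\Sigma|'(0)}{2(8\pi)^{1/2}}\bigl(\chi-F(0)/8\pi\bigr)$ using a common factor of $|\Sigma|^{1/2}/(8\pi)^{3/2}$; the $\tfrac{2}{3}\Lambda$ piece inside $F(0)$ generates the extra $\tfrac{\Lambda}{3}$ which combines with the $\tfrac{2}{3}\Lambda$ coming from the $F'(0)$ term to produce the full $\Lambda-R$. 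The $H\Delta_{\Sigma}\phi$ term is kept as is, since it will be handled by integration by parts at the second variation stage. The main obstacle is therefore purely bookkeeping: keeping signs correct through the product rule, ensuring the boundary terms of the first variation of area vanish by the free boundary condition, and carefully applying the Gauss equation to combine the ambient and intrinsic curvature pieces into the stated form.
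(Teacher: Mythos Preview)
Your argument is correct and follows exactly the route the paper sketches: differentiate the product, apply \eqref{1area} and \eqref{varimean}, and then use the Gauss equation \eqref{gauss} to trade $\mathrm{Ric}(N,N)+|A|^2$ for $\tfrac{R}{2}-K_\Sigma+\tfrac{H^2}{2}+\tfrac{|A|^2}{2}$. Your treatment is in fact more explicit than the paper's, which only indicates the ingredients; in particular your observation that the boundary term in \eqref{1area} vanishes trivially for a normal variation (and that the free boundary condition is what makes $\phi N$ an admissible proper variation) is a useful clarification.
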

\begin{proof}
Taking the derivative of the modified Hawking mass given in \eqref{defimass} for the variation $f_t$ of $\Sigma$ provided  previously to obtain
{\setlength\arraycolsep{2pt}
\begin{eqnarray}
		\frac{d}{dt}\widetilde{m}_{H}(\Sigma_{t})&=&\frac{1}{2}\frac{|\Sigma_{t}|^{-\frac{1}{2}}}{(8\pi )^{\frac{1}{2}}}\left(1-\frac{1}{8\pi }\int_{\Sigma_{t}}H_{t}^2d\sigma_{t}-\frac{1}{12\pi }\Lambda|\Sigma_{t}|\right)\frac{d}{dt}d(\sigma_{t}) \nonumber \\
		&+& \frac{|\Sigma_{t}|^{\frac{1}{2}}}{(8\pi)^{\frac{1}{2}}}\left(-\frac{1}{8\pi }\int_{\Sigma_{t}}\left[2H_{t}\frac{d}{d t}(H_{t})d\sigma_{t}+H_{t}^2\frac{d}{dt}(d\sigma_{t})\right]-\frac{1}{12\pi }\Lambda\frac{d}{dt}(d\sigma_{t})\right),\nonumber 
\end{eqnarray}}where $\frac{d}{dt}d\sigma_t$ denotes the first variation of area functional. Hence, using the fact that $\Sigma$ is free boundary at $t=0$ and the Gauss equation
\begin{equation}\label{gauss}
2Ric(N,N)=R-2K_{\Sigma}+H^2-|A|^2,
\end{equation}
the first variation formula for the modified Hawking mass follows by a straightforward computation using the
following identities \eqref{1area} and \eqref{varimean}.
\end{proof}

In order to set the stage for the proof of the second variation formula of the modified Hawking mass, it is crucial to recall the first variation of the Jacobi operator obtained by Máximo and Nunes \cite{mn}.
\begin{proposition}[\cite{mn}, Proposition 6.2] For each function $\psi\in C^{\infty}(\Sigma)$, we have:{\setlength\arraycolsep{2pt}
\begin{eqnarray*}
L_{\Sigma}'(0)\psi&=&\frac{d}{dt}L_{\Sigma_t}\Big{|}_{t=0}\psi  = 2\phi g(A, Hess\psi)+2\psi g(A, Hess\phi)- 2\phi\omega(\nabla\psi)- 2\psi\omega(\nabla\phi)\\
&+&\phi g(\nabla H, \nabla\psi)-Hg(\nabla\psi,\nabla\phi)+2A(\nabla\phi,\nabla\psi) -\psi div_{\Sigma}(div_{\Sigma}\omega)-\phi\psi HK_{\Sigma}\\
&+&\phi\psi HRic(N,N)+\phi\psi H|A|^2+\phi\psi A_{ij}A_{ik}A_{jk}+\phi\psi R_{iNNj}A_{ij}
\end{eqnarray*}}
where $\omega$ is the 1-form on $\Sigma$ defined by $\omega(X) = Ric(X,N)$.
\end{proposition}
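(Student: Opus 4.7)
The plan is to differentiate the three summands of $L_{\Sigma_t}\psi = \Delta_{\Sigma_t}\psi + Ric_M(N_t,N_t)\psi + |A_t|^2\psi$ at $t=0$ separately (keeping $\psi$ independent of $t$) and then to combine them using the Codazzi equation and the contracted second Bianchi identity. The inputs are the standard evolution formulas for a normal variation with speed $\phi N$:
\begin{equation*}
\partial_t g_{ij}\big|_{t=0} = -2\phi A_{ij},\quad \partial_t g^{ij}\big|_{t=0} = 2\phi A^{ij},\quad \partial_t(d\sigma_t)\big|_{t=0} = -\phi H\,d\sigma,
\end{equation*}
together with the tangential part of the variation of the unit normal $\partial_t N_t|_{t=0} = -\nabla_\Sigma\phi$ and the evolution of the second fundamental form
\begin{equation*}
\partial_t A_{ij}\big|_{t=0} = -Hess_\Sigma \phi_{ij} + \phi(A^2)_{ij} - \phi R_{iNNj}.
\end{equation*}

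For the Laplacian piece, writing $\Delta_{\Sigma_t}\psi$ in local coordinates and inserting the variations of $g^{ij}$ and $\sqrt{\det g_t}$ produces, after grouping, the contributions $2\phi g(A,Hess\,\psi) + 2A(\nabla\phi,\nabla\psi) - Hg(\nabla\phi,\nabla\psi)$ together with a residual term proportional to $g(div_\Sigma(\phi A),\nabla\psi)$; the Codazzi identity $div_\Sigma A = \nabla H + \omega$ converts this residual into the missing $\phi g(\nabla H,\nabla\psi)$ and $-2\phi\,\omega(\nabla\psi)$ summands of the target formula. For the $|A|^2$ piece, contracting $\partial_t A_{ij}$ against $g^{ik}g^{jl}A_{kl}$ and accounting for the variation of $g^{ij}$ yields
\begin{equation*}
\partial_t|A_t|^2\big|_{t=0} = -2g(A,Hess\,\phi) + 6\phi A_{ij}A_{ik}A_{jk} - 2\phi R_{iNNj}A_{ij},
\end{equation*}
which, multiplied by $\psi$, supplies the $2\psi g(A,Hess\,\phi)$ term and an excess of cubic-in-$A$ and $R_{iNNj}A_{ij}$ contributions to be reconciled in the next step.

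For the Ricci piece, the chain rule gives $\partial_t Ric_M(N_t,N_t)|_{t=0} = \phi(\nabla_N Ric)(N,N) - 2\,\omega(\nabla\phi)$, which directly produces the $-2\psi\,\omega(\nabla\phi)$ term. The substantive step is to re-express $\phi(\nabla_N Ric)(N,N)$: using the contracted second Bianchi identity $\nabla^i R_{ij}=\tfrac12\nabla_j R$, applied with $j$ in the normal direction and then decomposed into tangential and normal components along $\Sigma$, in combination with the Gauss equation \eqref{gauss} $2Ric(N,N) = R - 2K_\Sigma + H^2 - |A|^2$, one rewrites $\phi(\nabla_N Ric)(N,N)$ as $-div_\Sigma(div_\Sigma\omega) + \phi H\bigl(Ric(N,N) - K_\Sigma + |A|^2\bigr)$ plus further $\phi R_{iNNj}A_{ij}$ and cubic-in-$A$ contributions. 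These corrections are arranged so that, upon summing all three variations and collecting like terms, the coefficients of $\phi\psi A_{ij}A_{ik}A_{jk}$ and $\phi\psi R_{iNNj}A_{ij}$ collapse to $+1$, giving the stated expression.

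The principal obstacle is bookkeeping rather than any conceptual subtlety. The sign convention $A(U,V) = g(-\nabla_U N, V)$ enters every intermediate identity used above, and the delicate cancellation producing the final coefficients of the cubic-in-$A$ and $R_{iNNj}A_{ij}$ terms is precisely where sign errors are most likely to enter; each intermediate expression must therefore be double-checked against this convention. Once the Ricci piece is correctly rearranged by Bianchi and the Gauss equation, summing the three variations and collecting like terms yields precisely the stated formula for $L_\Sigma'(0)\psi$.
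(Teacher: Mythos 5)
First, note that the paper offers no proof of this proposition: it is quoted directly from M\'aximo--Nunes \cite{mn}, Proposition 6.2, so there is no in-paper argument to compare against. Your strategy --- differentiating $\Delta_{\Sigma_t}\psi$, $Ric(N_t,N_t)\psi$ and $|A_t|^2\psi$ separately using the evolution of $g_{ij}$, $N_t$ and $A_{ij}$, then reconciling via Codazzi, the contracted second Bianchi identity and the Gauss equation --- is exactly the route taken in \cite{mn}, so the plan itself is sound.

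However, the computation as written contains sign errors that are not mere bookkeeping, because the entire content of the proposition is the precise coefficients. With the paper's conventions $A(U,V)=g(-\nabla_U N,V)$, $H=\operatorname{tr}A$ and $\partial_t g^{ij}=2\phi A^{ij}$, your stated evolution $\partial_t A_{ij}=-\mathrm{Hess}\,\phi_{ij}+\phi(A^2)_{ij}-\phi R_{iNNj}$ has the wrong overall sign: tracing it gives $\partial_t H=-\Delta_\Sigma\phi+3\phi|A|^2-\phi\,Ric(N,N)$, contradicting \eqref{varimean}; the correct formula is $\partial_t A_{ij}=+\mathrm{Hess}\,\phi_{ij}-\phi(A^2)_{ij}+\phi R_{iNNj}$. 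Consequently your $\partial_t|A_t|^2$ is also wrong --- the correct value is $2g(A,\mathrm{Hess}\,\phi)+2\phi A_{ij}A_{jk}A_{ki}+2\phi R_{iNNj}A_{ij}$ --- and your text is internally inconsistent on this point: you display $-2g(A,\mathrm{Hess}\,\phi)+\cdots$ and then claim it ``supplies the $2\psi g(A,\mathrm{Hess}\,\phi)$ term.'' Likewise, the traced Codazzi identity in these conventions reads $\mathrm{div}_\Sigma A=\nabla H-\omega$, not $\nabla H+\omega$; only with the minus sign does the residual $2\phi(\mathrm{div}_\Sigma A)(\nabla\psi)-\phi g(\nabla H,\nabla\psi)$ become the required $\phi g(\nabla H,\nabla\psi)-2\phi\,\omega(\nabla\psi)$. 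Finally, the decisive step --- rewriting $\phi(\nabla_N Ric)(N,N)$ via the contracted Bianchi identity and the Gauss equation so that the cubic-in-$A$ and $R_{iNNj}A_{ij}$ coefficients collapse to $+1$ --- is asserted (``these corrections are arranged so that\ldots'') rather than carried out; since that cancellation is precisely where the formula is won or lost, the argument as it stands does not establish the statement. To repair it, fix the sign of $\partial_t A_{ij}$ by checking its trace against \eqref{varimean}, redo the $|A|^2$ and Codazzi steps accordingly, and write out explicitly the decomposition $\sum_i(\nabla_{e_i}Ric)(e_i,N)+(\nabla_N Ric)(N,N)=\tfrac12\nabla_N R$ together with the Gauss equation \eqref{gauss}.
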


Now, following the procedure adopted in \cite{mn}, we shall present the second variation formula, which can be expressed as follows:

\begin{proposition}\label{segundavaria}
	Under the considerations
of Proposition \ref{1variation}. If  $\Sigma \subset M$ is a critical point of the modified Hawking mass, then 
{\setlength\arraycolsep{2pt}
\begin{eqnarray}		
	\left.\frac{d^2}{dt^2}\widetilde{m}_{H}(\Sigma_{t})\right|_{t=0}&=& -\frac{3\widetilde{m}_{H}(\Sigma)}{4|\Sigma|^2}\left(\int_{\Sigma}H\phi d\sigma\right)^2-\frac{2|\Sigma|^{\frac{1}{2}}}{(8\pi)^{\frac{3}{2}}}\int_{\Sigma}((L_{\Sigma}\phi)^2+HL_{\Sigma}'(0)\phi )d\sigma\nonumber \\
	&+&\frac{|\Sigma|^{\frac{1}{2}}}{(8\pi)^{\frac{3}{2}}}\int_{\Sigma}(H^2+\frac{2}{3}\Lambda)(\phi L_{\Sigma}\phi-H^2\phi^2)d\sigma 
		+\frac{4|\Sigma|^{\frac{1}{2}}}{(8\pi)^{\frac{3}{2}}}\int_{\Sigma}H^2 \phi L_{\Sigma}\phi d\sigma \nonumber\\ 
			&-&\frac{\widetilde{m}_{H}(\Sigma)}{2|\Sigma|}\int_{\Sigma}(\phi L_{\Sigma}\phi-H^2 \phi^2+div_{\Sigma}(\nabla_{X}X) )d\sigma\nonumber \\
		&-&\frac{|\Sigma|^{\frac{1}{2}}}{(8\pi)^{\frac{3}{2}}}\int_{\partial \Sigma}(H^2+\frac{2}{3}\Lambda)\left(\frac{\partial \phi }{\partial \nu }-g(N, \nabla_{N}X)\phi\right)\phi ds \nonumber \\
		&+& \frac{\widetilde{m}_{H}(\Sigma)}{2|\Sigma|}\int_{\partial \Sigma}\left(\frac{\partial \phi}{\partial \nu}-g(N, \nabla_{N}X)\phi\right)\phi ds, \nonumber
\end{eqnarray}}
where $X(x)=\frac{\partial f}{\partial t}(x,0)$.
\end{proposition}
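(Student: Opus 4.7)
The plan is a direct calculation: differentiate the first variation formula from Proposition \ref{1variation} once more and use the critical-point hypothesis $\widetilde{m}_H'(0)=0$ to eliminate the resulting cross term. It is cleanest to factor $\widetilde{m}_H(\Sigma_t)=A(t)B(t)$ with $A(t)=\sqrt{|\Sigma_t|/(8\pi)}$ and $B(t)=\chi(\Sigma)-\frac{1}{8\pi}\int_{\Sigma_t}(H_t^2+\frac{2}{3}\Lambda)\,d\sigma_t$, apply Leibniz twice, and use criticality $A'(0)B(0)+A(0)B'(0)=0$ to write $B'(0)=-A'(0)B(0)/A(0)$. A short computation then reorganizes
\begin{equation*}
\widetilde{m}_H''(0)=A''(0)B(0)+2A'(0)B'(0)+A(0)B''(0)=\tfrac{a''(0)}{2|\Sigma|}\widetilde{m}_H(\Sigma)-\tfrac{3 a'(0)^2}{4|\Sigma|^2}\widetilde{m}_H(\Sigma)+A(0)B''(0),
\end{equation*}
where $a(t)=|\Sigma_t|$. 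Substituting $a'(0)=-\int_\Sigma H\phi\,d\sigma$ from \eqref{1area} together with the free boundary condition, the middle piece reproduces the first term $-\frac{3\widetilde{m}_H(\Sigma)}{4|\Sigma|^2}(\int_\Sigma H\phi\,d\sigma)^2$ of the statement.

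For $a''(0)$ one uses the second variation of area for a (not-necessarily minimal) free boundary surface deformed by an ambient field $X$ whose initial value on $\Sigma$ is $\phi N$:
\begin{equation*}
a''(0)=-\int_\Sigma\bigl(\phi L_\Sigma\phi-H^2\phi^2+\mathrm{div}_\Sigma(\nabla_X X)\bigr)\,d\sigma+\int_{\partial\Sigma}\Bigl(\tfrac{\partial\phi}{\partial\nu}-g(N,\nabla_N X)\phi\Bigr)\phi\,ds.
\end{equation*}
Multiplied by the factor $\widetilde{m}_H(\Sigma)/(2|\Sigma|)$, this yields the two lines of the statement proportional to that factor, including both the volumetric correction $\mathrm{div}_\Sigma(\nabla_X X)$ and the boundary term involving $g(N,\nabla_N X)$.

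To compute $A(0)B''(0)$, one differentiates $\int_{\Sigma_t}(H_t^2+\frac{2}{3}\Lambda)\,d\sigma_t$ twice at $t=0$. Expanding $(H_t^2)''=2(\dot H_t)^2+2H_t\ddot H_t$, substituting $\dot H_t|_{t=0}=L_\Sigma\phi$ from \eqref{varimean}, and evaluating $\ddot H_t|_{t=0}$ with the help of the M\'aximo--Nunes proposition quoted above, while tracking the moving boundary through the second identity in \eqref{varimean} and the free boundary condition at $t=0$, one produces the bulk terms in the statement involving $(L_\Sigma\phi)^2$, $HL_\Sigma'(0)\phi$, $(H^2+\frac{2}{3}\Lambda)(\phi L_\Sigma\phi-H^2\phi^2)$ and $H^2\phi L_\Sigma\phi$, together with the boundary integral of $(H^2+\frac{2}{3}\Lambda)(\partial_\nu\phi-g(N,\nabla_N X)\phi)\phi$.

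The principal obstacle is not conceptual but organizational: one must carefully collect the many intermediate terms produced by differentiating double integrals over a moving domain with moving boundary, invoke the free boundary condition only at $t=0$ (not off it), and verify that the tangential contributions arising because the variation field need not stay normal to $\Sigma_t$ for $t\ne 0$ are absorbed precisely into the $\mathrm{div}_\Sigma(\nabla_X X)$ term in the bulk and the $g(N,\nabla_N X)$ term on the boundary, so that no uncontrolled tangential dependence survives in the final identity.
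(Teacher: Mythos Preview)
Your proposal is correct and follows essentially the same route as the paper: differentiate the first variation once more, plug in the second variation of area together with $\dot H|_{t=0}=L_\Sigma\phi$ and the M\'aximo--Nunes expression for $L_\Sigma'(0)$, and use the critical-point hypothesis to eliminate the cross term. The paper's proof is in fact terser than yours---it just says ``direct computation'' and records the second variation of area and the form the criticality identity takes---so your explicit factorization $\widetilde m_H(\Sigma_t)=A(t)B(t)$ and the resulting decomposition $\widetilde m_H''(0)=\tfrac{a''(0)}{2|\Sigma|}\widetilde m_H(\Sigma)-\tfrac{3a'(0)^2}{4|\Sigma|^2}\widetilde m_H(\Sigma)+A(0)B''(0)$ is a cleaner way of organizing the same bookkeeping.
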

\begin{proof}
Once Proposition 1 is established, the above follows after a direct computation
using the first variation formula of the  modified Hawking mass and the fact that the second variation formula of the area element is given by
{\setlength\arraycolsep{2pt}
\begin{eqnarray*}
\frac{d^2}{dt^2}|\Sigma_t|\Big{|}_{t=0}=\int_{\Sigma}(-\phi L_{\Sigma}\phi+H^2\phi-Hdiv_{\Sigma}(\nabla_XX)) d\sigma +\int_{\partial\Sigma}\Big(\frac{\partial\phi}{\partial\nu}-\Pi(N,N)\phi\Big)\phi ds 
\end{eqnarray*}}together with our assumption that $\Sigma$ is a critical point for the modified Hawking mass
{\setlength\arraycolsep{2pt}
\begin{eqnarray*}
\frac{\widetilde{m}_{H}(\Sigma)}{2|\Sigma|}\left(\int_{\Sigma}H\phi d\sigma-\int_{\partial \Sigma}g(\nu, X)ds\right)&=&\frac{-|\Sigma|^{\frac{1}{2}}}{(8\pi)^{\frac{3}{2}}}\Big[\int_{\Sigma}2HL_{\Sigma}\phi d\sigma
-\int_{\Sigma}(H^2+\frac{2\Lambda}{3})H\phi d\sigma \\
&+&\int_{\partial \Sigma}(H^2+\frac{2\Lambda}{3})g(\nu, X)ds\Big].
\end{eqnarray*}}
\end{proof}

\section{Stability result for  modified Hawking mass}\label{sec3}

In this section, we seek to construct a foliation around $\Sigma$ by CMC embedded free boundary surfaces under the assumption of the strictly stability and locally maximizes of the modified Hawking mass, which play an important role in a proof of our main theorem.
\begin{proposition}\label{locally}Let $(M^3, g)$ be a Riemannian manifold with boundary $\partial M$ which satisfies $R\geq 2$ and $H^{\partial M}\geq 0$. If $\Sigma$ is an properly
embedded, two-sided, free boundary strictly stable minimal two-disk which locally maximizes the modified Hawking mass, then
\begin{equation*}
|\Sigma|= \frac{2\pi}{\lambda_1(L_{\Sigma})+1},
\end{equation*}
where $\lambda_1(L_{\Sigma})$ is the first eigenvalue of the stability operator. Moreover, along $\Sigma$, we have  $A = 0$, $R = 2$,  $Ric(N,N) = -\lambda_1(L_{\Sigma})$,  and its Gaussian
curvature $K_{\Sigma} = \frac{2\pi}{|\Sigma|}$, and along $\partial\Sigma$, geodesic curvature is zero  and $H^{\partial M}=0$. In particular, $\Sigma$ is a hemisphere.
\end{proposition}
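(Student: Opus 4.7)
The plan is to combine the second-variation formula of Proposition 2 with an integral identity derived from the first Robin eigenfunction of $L_\Sigma$, pinning down $|\Sigma|$ to $2\pi/(\lambda_1+1)$ and forcing the remaining rigidity data.

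Since $\Sigma$ is minimal, every term of the first-variation formula of Proposition 1 contains $H\equiv 0$ as a factor, so $\Sigma$ is automatically a critical point of $\widetilde m_H$ and one may apply Proposition 2. Strict stability provides a positive first Robin eigenfunction $\phi\in C^\infty(\Sigma)$ of $L_\Sigma$ with eigenvalue $\lambda_1=\lambda_1(L_\Sigma)>0$ satisfying the boundary condition $\partial\phi/\partial\nu=\Pi(N,N)\phi$ on $\partial\Sigma$. This Robin compatibility is exactly what allows one to construct a smooth variation $f_t$ through properly embedded free boundary surfaces with initial velocity $\phi N$, and for such a variation the local maximality of $\widetilde m_H$ forces $\left.\tfrac{d^2}{dt^2}\widetilde m_H(\Sigma_t)\right|_{t=0}\leq 0$.

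I would then substitute into Proposition 2 with $H\equiv 0$ and $L_\Sigma\phi=-\lambda_1\phi$. Every term carrying $H$ or $H^2$ drops out, and the surviving interior integrals collapse to multiples of $\int_\Sigma\phi^2\,d\sigma$ whose coefficients are polynomials in $\lambda_1$, $\Lambda$, and $|\Sigma|$. The $T_5$ divergence term $\int_\Sigma\operatorname{div}_\Sigma(\nabla_X X)\,d\sigma$ converts via the divergence theorem to a boundary integral; a direct calculation using $X=\phi N$ and $g(N,\nu)=0$ along $\partial\Sigma$ gives $g(\nabla_X X,\nu)=-\phi^2\Pi(N,N)$. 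The mixed boundary terms $T_6,T_7$ vanish because the Robin condition together with $g(N,\nabla_N X)=\Pi(N,N)\phi$ (valid for the free-boundary variation with $X=\phi N$) makes the bracket $\partial\phi/\partial\nu-g(N,\nabla_N X)\phi$ zero on $\partial\Sigma$. Substituting the closed expression $\widetilde m_H(\Sigma)=\sqrt{|\Sigma|/(8\pi)}\bigl(1-\Lambda|\Sigma|/(12\pi)\bigr)$ (valid since $\chi(\Sigma)=1$ and $H\equiv 0$), dividing by $\lambda_1\int_\Sigma\phi^2>0$, and using $\Lambda\geq 2$, the maximality inequality reduces to
\[
(\lambda_1+1)|\Sigma|\;\geq\;2\pi.
\]

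For the reverse inequality, I would divide $L_\Sigma\phi+\lambda_1\phi=0$ by $\phi>0$, rewrite $\Delta\phi/\phi=\Delta\log\phi+|\nabla\log\phi|^2$, integrate over $\Sigma$, and invoke the divergence theorem with the Robin boundary condition together with the Gauss identity $\operatorname{Ric}(N,N)+|A|^2=R/2-K_\Sigma+|A|^2/2$. Combined with Gauss--Bonnet on the disk and the boundary identity $\Pi(N,N)=\kappa_g+H^{\partial M}$, this yields
\[
(\lambda_1+1)|\Sigma|\;=\;2\pi\;-\;\int_\Sigma\tfrac{R-2}{2}\,d\sigma\;-\;\int_\Sigma\tfrac{|A|^2}{2}\,d\sigma\;-\;\int_\Sigma|\nabla\log\phi|^2\,d\sigma\;-\;\int_{\partial\Sigma}\!\!H^{\partial M}\,ds\;-\;2\!\int_{\partial\Sigma}\!\!\kappa_g\,ds.
\]
Comparing with the Hawking inequality above, which itself carries a $T_5$ boundary contribution involving $\Pi(N,N)=\kappa_g+H^{\partial M}$ of matching structure, forces $(\lambda_1+1)|\Sigma|=2\pi$ and simultaneously kills every remaining non-negative piece: $R\equiv 2$ and $|A|\equiv 0$ on $\Sigma$; $H^{\partial M}\equiv 0$ and $\kappa_g\equiv 0$ on $\partial\Sigma$; and $\phi$ is constant.

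With $\phi$ constant and $|A|=0$, reading $L_\Sigma\phi=-\lambda_1\phi$ gives $\operatorname{Ric}(N,N)=-\lambda_1$. The Gauss equation then produces $K_\Sigma=R/2-\operatorname{Ric}(N,N)=1+\lambda_1=2\pi/|\Sigma|$, a positive constant; writing $K_\Sigma=1/a^2$ yields $a=(\lambda_1+1)^{-1/2}\in(0,1)$ (strict because $\lambda_1>0$). Finally, $\Pi(N,N)=\kappa_g+H^{\partial M}=0$, and a two-disk of constant positive Gauss curvature $1/a^2$ with geodesic boundary is, by classical classification, isometric to the hemisphere of the round $2$-sphere of radius $a$.

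The main obstacle will be the careful bookkeeping of the boundary contributions on both sides: verifying the simplifications in Proposition 2 (the cancellation of $T_6,T_7$ and the precise form of the $T_5$ divergence contribution) for the chosen free-boundary variation, and then closing the Hawking-side and stability-side inequalities coherently despite the absence of a priori sign control on $\int_{\partial\Sigma}\kappa_g\,ds$.
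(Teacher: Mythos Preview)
Your overall strategy coincides with the paper's: obtain the lower bound $(\lambda_1+1)|\Sigma|\ge 2\pi$ from Proposition~2 applied to a first Robin eigenfunction, obtain the reverse inequality from stability together with the Gauss equation and Gauss--Bonnet, and read off the rigidity from the equality case. The only structural difference is that for the upper bound the paper plugs the test function $\phi\equiv 1$ into the index form $\mathcal Q$, whereas you divide the eigenvalue equation by the positive eigenfunction and integrate; these are equivalent routes to the same inequality (yours produces an identity carrying the extra nonnegative term $\int_\Sigma|\nabla\log\phi|^2$).

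The obstacle you flag at the end is not real; it is produced by a sign slip in your boundary identity. With the Gauss--Bonnet convention for geodesic curvature (so that $\int_\Sigma K_\Sigma+\int_{\partial\Sigma}\kappa_g=2\pi$), one has $\Pi(T,T)=\kappa_g$ and hence
\[
H^{\partial M}=\Pi(T,T)+\Pi(N,N)=\kappa_g+\Pi(N,N),\qquad\text{i.e.}\qquad \Pi(N,N)=H^{\partial M}-\kappa_g,
\]
which is the relation the paper uses, not $\Pi(N,N)=\kappa_g+H^{\partial M}$. Redoing your integration with this sign, the $\kappa_g$ coming from the Robin boundary term $\int_{\partial\Sigma}\Pi(N,N)$ cancels exactly against the $\kappa_g$ coming from Gauss--Bonnet, and your identity becomes
\[
(\lambda_1+1)|\Sigma|=2\pi-\int_\Sigma\frac{R-2}{2}\,d\sigma-\int_\Sigma\frac{|A|^2}{2}\,d\sigma-\int_\Sigma|\nabla\log\phi|^2\,d\sigma-\int_{\partial\Sigma}H^{\partial M}\,ds,
\]
with no residual $\kappa_g$ term. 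All remaining terms on the right are nonpositive under the hypotheses, so $(\lambda_1+1)|\Sigma|\le 2\pi$, and equality forces $R\equiv 2$, $A\equiv 0$, $\phi$ constant, and $H^{\partial M}\equiv 0$; then $\Pi(N,N)=0$ and $\kappa_g=0$ follow as you say. A minor secondary point: in Proposition~2 the vector field appearing in $g(N,\nabla_N X)$ is the unit outward normal to $\partial M$, not the variation field, so the bracket $\partial\phi/\partial\nu-g(N,\nabla_N X)\phi$ equals $\partial\phi/\partial\nu-\Pi(N,N)\phi$ and vanishes by the Robin condition directly; your conclusion about $T_6,T_7$ is correct, but the justification you give (computing $\nabla_N(\phi N)$) is not the right one.
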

\begin{proof}
	Since $\Sigma$ is strictly stable, we obtain that  $\lambda_{1}(L_{\Sigma}) \int_{\Sigma}\phi^2 d\sigma\leq \mathcal{Q}(\phi, \phi)$ for any smooth function $\phi $ on $\Sigma$. Choosing $\phi=1$, we
obtain 
	\begin{equation}\label{area<}
		\lambda_{1}(L_{\Sigma})|\Sigma|\leq -\int_{\Sigma}\left(Ric(N, N)+|A|^2\right)d\sigma-\int_{\partial \Sigma}\Pi(N, N)ds.
	\end{equation}
Note that the Gauss equation implies
\begin{equation*}
	Ric(N, N)+|A|^2=\frac{R}{2}-K_{\Sigma}+\frac{|A|^2}{2}\geq 1-K_{\Sigma},
\end{equation*}
where in the last inequality we use that $R\geq 2$.
 
Then, using the Gauss-Bonnet Theorem and the fact that $H^{\partial M} =k_{g}+\Pi(N, N)$ along $\partial\Sigma$,  we deduce
{\setlength\arraycolsep{2pt}
\begin{eqnarray*}
(1+\lambda_{1}(L_{\Sigma}))|\Sigma| &\leq &\int_{\Sigma}K_{\Sigma}d\sigma-\int_{\partial \Sigma}\Pi(N, N)ds\\
&=& 2\pi-\int_{\partial\Sigma}(k_g+\Pi(N,N))ds\\
&=&2\pi-\int_{\partial\Sigma}H^{\partial M}ds,
\end{eqnarray*}}where $k_g$ denotes the geodesic curvature of $\partial\Sigma$ in $\Sigma$. Thus, from $H^{\partial M} \geq 0$ we also have   
\begin{equation}\label{eq}
|\Sigma|\leq\frac{2\pi}{\lambda_{1}(L_{\Sigma})+1}.
\end{equation}

Moreover, if equality holds, all inequalities
above must be equalities. Then, $\mathcal{Q}(1,1)=\lambda_{1}(L_{\Sigma})|\Sigma|$, $\Sigma$ is totally geodesic, $R=2$ and $H^{\partial M}=0$ along $\Sigma$. Next, it follows from $\mathcal{Q}(\phi, \phi)\geq \lambda_{1}(L_{\Sigma})\int_{\Sigma}\phi^2d\sigma$ for any $\phi\in C^{\infty}(\Sigma)$, that 
	\begin{equation*}
		\mathcal{F}(\phi, h):=\mathcal{Q}(\phi, h)-\lambda_{1}(L_{\Sigma})\int_{\Sigma}\phi h d\sigma, 
	\end{equation*}
satisfies $\mathcal{F}(\phi, \phi)\geq 0$ and $\mathcal{F}(1, 1)= 0$ for every $\phi\in C^{\infty}(\Sigma)$. At the same time, one easily verifies that $\mathcal{F}(1,\phi)=0$ for any $\phi\in C^{\infty}(\Sigma)$, and hence we may use this data to deduce
\begin{equation}	
0=\mathcal{F}(1, \phi)	=-\int_{\Sigma}(Ric(N, N)+\lambda_{1}(L_{\Sigma}))\phi-\int_{\partial \Sigma}\Pi(N, N)\phi.
\end{equation}
This allow us to conclude that, along $\Sigma$,
$Ric(N, N) = -\lambda_1(L_{\Sigma})$, its Gaussian curvature $K_{\Sigma} = \frac{2\pi}{|\Sigma|}$ and $\Pi(N, N)=k_g=0$ along $\partial\Sigma$.

Finally, we derive the reverse of inequality \eqref{eq}; using the fact that $\Sigma$ locally maximizes the modified Hawking mass in the Propositon \ref{segundavaria}, we achieve
{\setlength\arraycolsep{2pt}
\begin{eqnarray*}		
  0&\geq & -\frac{2|\Sigma|^{\frac{1}{2}}}{(8\pi)^{\frac{3}{2}}}\int_{\Sigma}(L_{\Sigma}\phi)^2d\sigma 
			-\left(\frac{\widetilde{m}_{H}(\Sigma)}{2|\Sigma|}-\frac{4|\Sigma|^{\frac{1}{2}}}{3(8\pi)^{\frac{3}{2}}}\right)\int_{\Sigma}\phi L_{\Sigma}\phi d\sigma \\		
		&-&\left(\frac{4|\Sigma|^{\frac{1}{2}}}{3(8\pi)^{\frac{3}{2}}}-\frac{\widetilde{m}_{H}(\Sigma)}{2|\Sigma|}\right)\int_{\partial \Sigma}\left(\frac{\partial \phi }{\partial \nu }-g(N, \nabla_{N}X)\phi\right)\phi ds. \\
\end{eqnarray*}}
Furthermore, substituting into the last inequality an eigenfunction of the problem \eqref{eingevalue} satisfying $\int_{\Sigma}\phi^2d\sigma=1$, we have
	\begin{equation*}			
-\frac{2|\Sigma|^{\frac{1}{2}}}{(8\pi)^{\frac{3}{2}}}\lambda_1(L_{\Sigma})^2+\frac{\widetilde{m}_{H}(\Sigma)}{2|\Sigma|}\lambda_1(L_{\Sigma})-\frac{4|\Sigma|^{\frac{1}{2}}}{3(8\pi)^{\frac{3}{2}}}\lambda_1(L_{\Sigma})\leq 0,
	\end{equation*}
using the definition of modified Hawking mass, we conclude that
	\begin{equation*}
		( 8\pi -4|\Sigma|)\lambda_{1}(L_{\Sigma})\leq 4|\Sigma|\lambda_{1}(L_{\Sigma})^2,
	\end{equation*}
and, since $\lambda_{1}(L_{\Sigma})>0$, we infer
	\begin{equation*}
	|\Sigma|\geq \frac{2\pi}{\lambda_{1}(L_{\Sigma})+1}
\end{equation*}
which finishes the proof of the proposition.
\end{proof}

The next result is a crucial step in the proof of the main result. Its proof follows the same arguments used in [Proposition 5.1, \cite{mn}] and [Proposition 10, \cite{lucas}]. One can construct a one-parameter
family $\Sigma(t)$ in the neighborhood of $\Sigma$ such that $\Sigma(t)$ is a free boundary surface and has constant
mean curvature.

\begin{proposition}\label{cmc} Let $(M^3, g)$ be a Riemannian manifold with boundary $\partial M$ which satisfies $R\geq 2$ and $H^{\partial M}\geq 0$. If $\Sigma$ is a properly
embedded, two-sided, free boundary strictly stable minimal two-disk satisfying
\begin{equation*}
|\Sigma|= \frac{2\pi}{\lambda_1(L_{\Sigma})+1},
\end{equation*}
then there exists $\varepsilon > 0$ and a smooth function $\mu : (-\varepsilon,\varepsilon)\times\Sigma \rightarrow\mathbb{R}$ such that
$$\Sigma_t = \{\exp_x(\mu(t, x)N(x)); x \in\Sigma\}$$
is a family of free boundary surfaces with constant mean curvature and $N$ is the unit normal vector field along $\Sigma$. Moreover, the following properties hold:
$$\mu(0, x) = 0,\ \ \ \frac{\partial\mu}{\partial t} (0, x) = 1\ \ \  \text{and}\ \ \ \int_{\Sigma}(\mu(t,x)-t)d\sigma = 0$$
for each $x \in\Sigma$ and for each $t\in (-\varepsilon, \varepsilon)$. In particular, for some smaller $\varepsilon$, $\{\Sigma_t\}_{t\in(-\varepsilon,\varepsilon)}$ is a foliation of a neighborhood of $\Sigma_0=\Sigma$ in $M$.
\end{proposition}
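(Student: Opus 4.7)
The plan is to apply the implicit function theorem in H\"older spaces, following the strategy used in \cite{mn} and \cite{lucas}. First, I extend $N$ to a smooth vector field $\tilde{N}$ on a neighborhood of $\Sigma$ in $M$ that is tangent to $\partial M$ along $\partial M$; this is possible precisely because $\Sigma$ is free boundary, so $N|_{\partial\Sigma}\in T\partial M$. Let $\psi_s$ denote the flow of $\tilde{N}$, which then preserves $\partial M$, and for a small function $\mu\in C^{2,\alpha}(\Sigma)$ set $\Sigma_\mu=\{\psi_{\mu(x)}(x):x\in\Sigma\}$, which is a properly embedded surface with $\partial\Sigma_\mu\subset\partial M$.

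Consider the Banach spaces
\[
\mathcal{X}=\Big\{u\in C^{2,\alpha}(\Sigma):\int_\Sigma u\,d\sigma=0\Big\},\quad \mathcal{Y}=\Big\{f\in C^{0,\alpha}(\Sigma):\int_\Sigma f\,d\sigma=0\Big\},\quad \mathcal{Z}=C^{1,\alpha}(\partial\Sigma),
\]
and the smooth map $\Phi:(-\delta,\delta)\times\mathcal{X}\to\mathcal{Y}\times\mathcal{Z}$ (for $\delta>0$ small) given by
\[
\Phi(t,w)=\Big(H(\Sigma_{t+w})-\overline{H}(\Sigma_{t+w}),\ g(N_{t+w},X)|_{\partial\Sigma_{t+w}}\Big),
\]
where $\overline{H}$ denotes the average of $H$ over $\Sigma_{t+w}$, $N_{t+w}$ is a local unit normal to $\Sigma_{t+w}$, and $X$ the outward unit normal to $\partial M$. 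Then $\Phi(t,w)=0$ precisely when $\Sigma_{t+w}$ is a free boundary CMC surface, and $\Phi(0,0)=0$. Using \eqref{varimean}, the partial derivative in $w$ at $(0,0)$ is
\[
D_w\Phi(0,0)[\eta]=\Big(L_\Sigma\eta-\tfrac{1}{|\Sigma|}\int_\Sigma L_\Sigma\eta\,d\sigma,\ \Pi(N,N)\eta-\tfrac{\partial\eta}{\partial\nu}\Big).
\]

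The invertibility of $D_w\Phi(0,0)$ hinges on strict stability. Given $(f,g)\in\mathcal{Y}\times\mathcal{Z}$, I seek $\eta\in\mathcal{X}$ solving $L_\Sigma\eta=f+c$ in $\Sigma$ and the Robin condition $\partial\eta/\partial\nu-\Pi(N,N)\eta=-g$ on $\partial\Sigma$, for some constant $c$ chosen so that $\eta$ is mean zero. Since $\lambda_1(L_\Sigma)>0$, this Robin problem is uniquely solvable in H\"older spaces; let $\eta_0$ and $\eta_1$ be its solutions with data $(1,0)$ and $(f,-g)$ respectively. Integration by parts combined with strict stability give $\int_\Sigma\eta_0\,d\sigma=-\mathcal{Q}(\eta_0,\eta_0)<0$, so $c=-(\int_\Sigma\eta_1\,d\sigma)/(\int_\Sigma\eta_0\,d\sigma)$ and $\eta=\eta_1+c\eta_0$ give the desired solution. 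The implicit function theorem thus produces a smooth curve $t\mapsto w(t)\in\mathcal{X}$ on $(-\varepsilon,\varepsilon)$ with $w(0)=0$ and $\Phi(t,w(t))=0$. Define $\mu(t,x)=t+w(t,x)$; then $\mu(0,x)=0$ and $\int_\Sigma(\mu(t,x)-t)\,d\sigma=\int_\Sigma w(t,x)\,d\sigma=0$ hold by construction.

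The most delicate step is verifying $\partial_t\mu(0,x)=1$, equivalently $w'(0)=0$. Differentiating $\Phi(t,w(t))=0$ at $t=0$ yields $D_w\Phi(0,0)[w'(0)]=-\partial_t\Phi(0,0)$, so it suffices to show $\partial_t\Phi(0,0)=0$. This is exactly where Proposition \ref{locally} is used: along $\Sigma$ we have $A\equiv 0$ and $Ric(N,N)=-\lambda_1(L_\Sigma)$, so $L_\Sigma(1)=Ric(N,N)+|A|^2=-\lambda_1(L_\Sigma)$ is a constant whose mean-zero projection vanishes; and $\Pi(N,N)=0$ on $\partial\Sigma$, so the boundary component of $\partial_t\Phi(0,0)$ also vanishes. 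Hence $w'(0)=0$, and the positivity $\partial_t\mu(0,\cdot)=1>0$ guarantees, after possibly shrinking $\varepsilon$, that $(t,x)\mapsto\psi_{\mu(t,x)}(x)$ is a diffeomorphism onto a neighborhood of $\Sigma$ in $M$, providing the claimed foliation. The main obstacle is precisely the identity $\partial_t\Phi(0,0)=0$, whose validity depends on the special geometric consequences of Proposition \ref{locally}.
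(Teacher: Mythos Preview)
Your proof is correct and follows the same implicit function theorem strategy as the paper, with two minor technical variations worth recording. First, you parametrize nearby surfaces via the flow of an extension of $N$ tangent to $\partial M$, which automatically ensures $\partial\Sigma_\mu\subset\partial M$; the paper works with the exponential map as written in the statement, so your choice handles the properness of the boundary more transparently (at the cost of not literally matching the displayed form of $\Sigma_t$). Second, for the invertibility of $D_w\Phi(0,0)$ the paper first invokes the equality case of Proposition~\ref{locally} (which is available under the area hypothesis) to reduce $L_\Sigma$ to $\Delta_\Sigma-\lambda_1(L_\Sigma)$ and the boundary condition to a pure Neumann one, and then cites standard Neumann solvability; you instead keep the general Robin problem, use $\lambda_1(L_\Sigma)>0$ for unique solvability, and verify $\int_\Sigma\eta_0\,d\sigma=-\mathcal{Q}(\eta_0,\eta_0)<0$ to adjust the constant---a slightly more robust route that does not need the special structure of $L_\Sigma$. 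Both arguments then conclude $\partial_t\mu(0,\cdot)=1$ by the same mechanism, namely that the equality case of Proposition~\ref{locally} forces $L_\Sigma(1)=-\lambda_1(L_\Sigma)$ to be constant and $\Pi(N,N)=0$ along $\partial\Sigma$.
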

\begin{proof}
Since this proposition is crucial for the establishment of Theorem \ref{teoprincipal}, we include its
proof here for the sake of completeness. 

Considering the notation used in the section \ref{sec2}, let $N$ be the unit normal vector field of $\Sigma$, and let $X$ denote the unit normal vector field of $\partial M$ that coincides with the exterior conormal $\nu$ of $\partial\Sigma$.

For a function $u\in C^{2,\alpha}(\Sigma)$, $0<\alpha<1$, we define $\Sigma_{u}=\{\exp_{x}(u(x)N(x)); x\in\Sigma\}$. Note that $\Sigma_u$ is a properly embedded surface when $||u||_{2,\alpha}$ is sufficiently small.
Next, consider the Banach spaces $Z=\{u \in C^{2,\alpha}(\Sigma); \int_{\Sigma} ud\sigma = 0\}$ and $Y = \{u\in C^{0,\alpha}(\Sigma); \int_{\Sigma} ud\sigma = 0\}$. Given sufficiently small constants $\varepsilon>0$ and $\delta>0$, we define the map $\Upsilon:(-\varepsilon,\varepsilon)\times (B(0,\delta)\cap Z)\rightarrow Y\times C^{1,\alpha}(\partial\Sigma)$  by 
$$\Upsilon(t,u)=\left(H_{\Sigma_{t+u}}-\frac{1}{|\Sigma|}\int_{\Sigma} H_{\Sigma_{u+t}}d\sigma, g(N_{\Sigma_{u+t}},X_{\Sigma_{u+t}})\right),$$
where $B(0,\delta) = \{u\in C^{2,\alpha}(\Sigma); ||u||_{2,\alpha} < \delta\}$, $H_{\Sigma_{t+u}}$ denotes the mean curvature of $\Sigma_{u+t}$, $N_{\Sigma_{u+t}}$ denote the unit normal vector field of $\Sigma_{u+t}$, and $X_{\Sigma_{u+t}}$ denotes the restriction of $X$ to $\partial\Sigma_{u+t}$.

Note that $\Upsilon(0,0)=(0,0)$, because $\Sigma=\Sigma_0$. By Proposition \ref{locally}, the Jacobi operator of $\Sigma$ is given by $L_{\Sigma}=\Delta_{\Sigma}-\lambda_1(L_{\Sigma}).$

Consequently, for each $\upsilon\in Z$, we use \eqref{varimean} to obtain
{\setlength\arraycolsep{2pt}
\begin{eqnarray*}
D\Upsilon (0, 0)\cdot \upsilon &=& \frac{d}{ds}\Big{|}_{s=0}\Upsilon (0, s\upsilon)\\
&=&\frac{d}{ds}\Big{|}_{s=0}\left(H_{\Sigma_{s\upsilon}}-\frac{1}{|\Sigma|}\int_{\Sigma}H_{\Sigma_{s\upsilon}}d\sigma,  g(N_{\Sigma_{s\upsilon}},X_{\Sigma_{s\upsilon}})\right)\\
&=& \left(L_{\Sigma}\upsilon -\frac{1}{|\Sigma|}\int_{\Sigma}\Delta_{\Sigma}\upsilon d\sigma+ \frac{\lambda_1(L_{\Sigma})}{|\Sigma|}\int_{\Sigma}\upsilon d\sigma,-\frac{\partial\upsilon}{\partial\nu}\right)\\
&=& \left(\Delta_{\Sigma}\upsilon-\lambda_1(L_{\Sigma})\upsilon-\frac{1}{|\Sigma|}\int_{\partial\Sigma}\frac{\partial\upsilon}{\partial\nu}ds ,-\frac{\partial\upsilon}{\partial\nu}\right).
\end{eqnarray*}}Now, we claim that $D\Upsilon (0,0)$ is an isomorphism when restricted to $0\times Z$.
To prove this, it suffices to show that there exists a unique function $\varphi\in Z$ solving the following Neumann boundary problem
\begin{equation}\label{nbp}
\left\{\begin{array}{ll}\Delta_{\Sigma}\varphi-\lambda_1(L_{\Sigma})\varphi=f+\frac{1}{|\Sigma|}\int_{\partial\Sigma}zds\ \ {\rm in}\ \ \Sigma,\\\frac{\partial\varphi}{\partial \nu}=z\ \ {\rm on}\ \  \partial\Sigma.\end{array}\right.
\end{equation}
for given $f\in Y$ and $z\in C^{1,\alpha}(\partial\Sigma)$.  So, it suffices to apply [\cite{edp}, Theorem 3.2] or [\cite{Nardi}, Theorem 3.1] to solve the Neumann boundary problem (\ref{nbp}).

Next, we may invoke the Implicit Function Theorem to conclude for  
some smaller $\varepsilon>0$,
there exists $(t,u(t))\in(-\varepsilon,\varepsilon)\times B(0,\delta)$ such that $u(0) = 0$ and
$\Upsilon(u(t), t ) = (0, 0)$ for any $t \in(-\varepsilon,\varepsilon)$. More precisely, the surfaces
$$\Sigma_{t+u(t)}=\{\exp_x((t+u(t)(x))N(x))  ; x\in\Sigma\}$$
are free boundary constant mean curvature surfaces.

Proceeding, it is easy to see that smooth function $\mu:(-\varepsilon,\varepsilon)\times\Sigma\rightarrow\mathbb{R}$ defined by $\mu(t,x)=t+u(t)(x)$ satisfies $\mu(0,x)=0$ for each $x\in\Sigma$ and $\int_{\Sigma}(\mu(t,x)-t)d\sigma=0$ for each $t\in(-\varepsilon,\varepsilon)$. From here it follows that 
\begin{equation}\label{1}
\int_{\Sigma}\frac{\partial\mu}{\partial t}(0,\cdot)d\sigma =|\Sigma|.
\end{equation}
Moreover, since that $\Sigma_{\mu(t,x)}$ is a CMC free boundary surfaces, for every $t\in(-\varepsilon,\varepsilon)$, we have that
$$H_{\mu(t,\cdot)}=\frac{1}{|\Sigma|}\int_{\Sigma}H_{\mu(t,\cdot)}d\sigma.$$ 
Thus, after differentiating at $t = 0$ and invoking \eqref{varimeanCMC}, we deduce
{\setlength\arraycolsep{2pt}
\begin{eqnarray*}
L_{\Sigma}\left(\frac{\partial\mu}{\partial t}(0,\cdot)\right)&=&\frac{1}{|\Sigma|}\int_{\Sigma}(\Delta_{\Sigma}-\lambda_1(L_{\Sigma}))\frac{\partial\mu}{\partial t}(0,\cdot)d\sigma\\
&=&\frac{1}{|\Sigma|}\int_{\Sigma}\frac{\partial}{\partial\nu}\left(\frac{\partial\mu}{\partial t}(0,\cdot)\right)d\sigma-\frac{\lambda_1(L_{\Sigma})}{|\Sigma|}\int_{\Sigma}\frac{\partial\mu}{\partial t}(0,\cdot)d\sigma
\end{eqnarray*}}
and 
$$\frac{\partial}{\partial\nu}\left(\frac{\partial\mu}{\partial t}(0,\cdot)\right)=0\ \ \ {\rm on}\ \ \  \partial\Sigma,$$
since from by Proposition \ref{eq} we get $\Pi(N,N)=0$ on $\Sigma$. From this and by \eqref{1} it follows that
\begin{equation*}\label{cmcvariation}
\left\{\begin{array}{ll}L_{\Sigma}\left(\frac{\partial\mu}{\partial t}(0,\cdot)\right)=L_{\Sigma}(1)\ \ {\rm on}\ \ \Sigma_t,\\
\frac{\partial}{\partial\nu}\left(\frac{\partial\mu}{\partial t}(0,\cdot)\right)=0\ \ {\rm along}\ \  \partial\Sigma_t.\end{array}\right.
\end{equation*}
From here it follows that $\frac{\partial\mu}{\partial t} (0, x) = 1$ for each $x\in\Sigma$ which concludes the proof of the proposition.
\end{proof}

In order to set the stage for the proofs to follow, we use Proposition \ref{cmc} to define a mapping $f_t : \Sigma\rightarrow M$ by $f_t (x) =
\exp_x(\mu(x, t)N(x))$ for each $t \in (-\varepsilon, \varepsilon)$. Let $N_t (x)$ be the unit vector field normal along $\Sigma_t$ such that $N_0(x) = N(x)$ for all $x\in \Sigma$ and let us
denote $d\sigma_t$ and $ds_t$ the element of area of $\Sigma_t$ and $\partial\Sigma$ , respectively, with respect to the induced metric by $f_t$ .

Moreover, let $H(t)$ denote the mean curvature of $\Sigma_t$ with respct to $\nu_{t}$, as well as the lapse function $\rho_{t}:\Sigma_{t}\to \mathbb{R}$ which is defined by  
$$\rho_{t}(x)=\langle N_{t}(x), \frac{\partial}{\partial t}f_{t}(x)\rangle ,$$
for each  $t \in  (-\varepsilon, \varepsilon)$. Consequently, since $\{\Sigma_t\}$ is a foliation of $\Sigma$ by CMC free boundary surfaces, it follows from \eqref{varimean} and \eqref{varimeanCMC} that
\begin{equation}\label{cmcvariation}
\left\{\begin{array}{ll}H'(t)=L_{\Sigma_t}\rho_t\ \ {\rm on}\ \ \Sigma_t,\\\frac{\partial\rho_t}{\partial \nu_t}=g(N_t,\nabla_{
N_t}X)\rho_t\ \ {\rm along}\ \  \partial\Sigma_t,\end{array}\right.
\end{equation}
where $L_{\Sigma_t}$ is the Jacobi operator with associated surface $\Sigma_t$.

From Proposition \ref{cmc}, it is easy to check that
 $\rho_{0}\equiv 1$. Then we use Proposition
\ref{locally} and \eqref{cmcvariation} to conclude that
\begin{equation}\label{sinalH}
	\left. \frac{d}{dt}\right|_{t=0}H(t)=L_{\Sigma}(1)=-\lambda_{1}(L_{\Sigma})<0.
\end{equation}
This implies that we can choose $\varepsilon$ sufficiently small such that $ H(t)>0$ for $t\in(-\varepsilon,0)$ and $H(t)<0$ for $t\in(0,\varepsilon)$, hence $H$ is a decreasing function on $t$. 
In order to prove the main Theorem, we need to provide the monotonicity  of the modified Hawking mass along this foliation. This is the content of the following lemma.

\begin{lemma}\label{stablecmc} Let $\{\Sigma_{t}\}_t$ be a family of surfaces obtained in the Proposition \ref{cmc}. Then
{\setlength\arraycolsep{2pt}
\begin{eqnarray}
		\int_{ \Sigma_{t}}(Ric (N_{t}, N_{t})&+&|A_{t}|^2)\rho_{t}d\sigma_{t}=\overline{\rho_{t}}\int_{\Sigma_{t}}(Ric(N_{t}, N_{t})+|A_{t}|^2)d\sigma_{t}+H'(t)\theta(t, x)\nonumber \\
		&+&\overline{\rho_{t}}\int_{\Sigma_{t}}\frac{|\nabla \rho_{t}|^2}{\rho_{t}^2}d\sigma_{t}
		+\overline{\rho_{t}}\int_{\partial \Sigma_{t}}g(N_{t}, \nabla_{N_{t}}X)ds_{t}
		-\int_{\Sigma_{t}}(\Delta_{\Sigma_{t}}\rho_{t})d\sigma_{t},\nonumber
\end{eqnarray}}
where $\overline{\rho_{t}}=\frac{1}{|\Sigma_{t}|}\int_{\Sigma_{t}}\rho_{t}d\sigma_{t}$ and $\theta(x, t)$ is a non-positive function.
\end{lemma}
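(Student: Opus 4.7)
The key observation is that since $\{\Sigma_t\}$ is a CMC foliation, the mean curvature $H(t)$ depends only on $t$, so \eqref{cmcvariation} reads
\[
H'(t) = \Delta_{\Sigma_t}\rho_t + \bigl(Ric(N_t,N_t)+|A_t|^2\bigr)\rho_t \quad \text{on } \Sigma_t,
\]
with Robin boundary condition $\partial\rho_t/\partial\nu_t = g(N_t,\nabla_{N_t}X)\rho_t$ on $\partial\Sigma_t$. Since $\rho_0\equiv 1$, by shrinking $\varepsilon$ we may assume $\rho_t>0$ everywhere, so division by $\rho_t$ is legitimate. The plan is to extract the quantity $Ric(N_t,N_t)+|A_t|^2$ from this PDE in two different ways and then compare the resulting integral identities.

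For the first identity, simply integrate the displayed equation over $\Sigma_t$ to obtain
\[
\int_{\Sigma_t}\bigl(Ric(N_t,N_t)+|A_t|^2\bigr)\rho_t\,d\sigma_t = H'(t)|\Sigma_t| - \int_{\Sigma_t}\Delta_{\Sigma_t}\rho_t\,d\sigma_t.
\]
For the second, divide the PDE by $\rho_t$ and use the algebraic identity $\Delta_{\Sigma_t}\rho_t/\rho_t = \Delta_{\Sigma_t}\log\rho_t + |\nabla\rho_t|^2/\rho_t^2$. Integrating, and applying the divergence theorem together with the Robin condition to turn $\int_{\Sigma_t}\Delta_{\Sigma_t}\log\rho_t\,d\sigma_t$ into $\int_{\partial\Sigma_t}g(N_t,\nabla_{N_t}X)\,ds_t$, one gets
\[
\int_{\Sigma_t}\bigl(Ric(N_t,N_t)+|A_t|^2\bigr)\,d\sigma_t = H'(t)\!\int_{\Sigma_t}\frac{1}{\rho_t}\,d\sigma_t - \int_{\partial\Sigma_t} g(N_t,\nabla_{N_t}X)\,ds_t - \int_{\Sigma_t}\frac{|\nabla\rho_t|^2}{\rho_t^2}\,d\sigma_t.
\]

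Multiplying the second identity by $\overline{\rho_t}$ and combining it with the first, the terms with $|\nabla\rho_t|^2/\rho_t^2$ and the boundary integral $g(N_t,\nabla_{N_t}X)$ appear on the right with the correct sign, while the $H'(t)$--contribution becomes
\[
H'(t)\bigl(|\Sigma_t|-\overline{\rho_t}\!\int_{\Sigma_t}\tfrac{1}{\rho_t}d\sigma_t\bigr).
\]
Setting $\theta(t):=|\Sigma_t|-\overline{\rho_t}\int_{\Sigma_t}\frac{1}{\rho_t}\,d\sigma_t$ reproduces the asserted identity. The non-positivity of $\theta$ is immediate from Cauchy--Schwarz applied to $\sqrt{\rho_t}$ and $1/\sqrt{\rho_t}$:
\[
|\Sigma_t|^2 = \biggl(\int_{\Sigma_t}\!\sqrt{\rho_t}\cdot\tfrac{1}{\sqrt{\rho_t}}\,d\sigma_t\biggr)^{\!2} \le \int_{\Sigma_t}\!\rho_t\,d\sigma_t\,\cdot\int_{\Sigma_t}\!\tfrac{1}{\rho_t}\,d\sigma_t = \overline{\rho_t}|\Sigma_t|\!\int_{\Sigma_t}\!\tfrac{1}{\rho_t}\,d\sigma_t,
\]
with equality iff $\rho_t$ is constant on $\Sigma_t$.

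There is no serious obstacle; the proof is a manipulation of the CMC Jacobi identity. The only nontrivial step is recognizing that the logarithmic identity $\Delta\rho/\rho = \Delta\log\rho + |\nabla\rho|^2/\rho^2$ is precisely what allows the Robin boundary condition to collapse the boundary term into the exact form $\int_{\partial\Sigma_t} g(N_t,\nabla_{N_t}X)\,ds_t$ that appears on the right-hand side of the stated formula, and that Cauchy--Schwarz yields the sign of $\theta$ for free.
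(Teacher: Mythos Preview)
Your proof is correct and follows essentially the same route as the paper: divide the Jacobi identity $H'(t)=L_{\Sigma_t}\rho_t$ by $\rho_t$ and integrate (your second identity), integrate the Jacobi identity directly (your first identity), then combine after scaling the former by $\overline{\rho_t}$; the paper does exactly this, writing $\int \Delta\rho_t/\rho_t$ via integration by parts rather than the logarithmic form, which is of course the same computation. Your explicit Cauchy--Schwarz justification of $\theta\le 0$ is a welcome addition, since the paper merely asserts non-positivity without proof; note also that, as you implicitly observe, $\theta$ depends only on $t$ despite the paper's notation $\theta(t,x)$.
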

\begin{proof}
	 Since  $\rho_{0}\equiv 1$, by continuity we can assume $\rho_t>0$ in a neighborhood. Thus, multiplying equation \eqref{cmcvariation} by $1/\rho_t$ and integrating over $\Sigma_t$ we conclude that
{\setlength\arraycolsep{2pt}  
	\begin{eqnarray*}
	H'(t)\int_{\Sigma_{t}}\frac{1}{\rho_{t}}d\sigma_{t}&=&\int_{\Sigma_{t}}\frac{\Delta_{\Sigma_{t}}\rho_{t}}{\rho_{t}}d\sigma_{t}+\int_{\Sigma_{t}}(Ric(N_{t}, N_{t})+|A_{t}|^2)d\sigma_{t}\\
	&=&\int_{\partial \Sigma_{t}}\frac{1}{\rho_{t}}\frac{\partial \rho_{t}}{\partial \nu_{t}}ds_{t}+\int_{\Sigma_{t}}\frac{|\nabla \rho_{t}|^2}{\rho_{t}^2}d\sigma_{t}+\int_{\Sigma_{t}}(Ric(N_{t}, N_{t})+|A_{t}|^2)d\sigma_{t}.
\end{eqnarray*}}
Multiplying the last equation by $\overline{\rho_{t}}=\frac{1}{|\Sigma_{t}|}\int_{\Sigma_{t}}\rho_{t}d\sigma_{t}$ and subtracting it from the integral of equation \eqref{cmcvariation}, we find:
{\setlength\arraycolsep{2pt}  
\begin{eqnarray*}
	H'(t)\left(\overline{\rho_{t}}\int_{\Sigma_{t}}\frac{1}{\rho_{t}}d\sigma_{t}-|\Sigma_{t}|\right)&=&\overline{\rho_{t}}\int_{\partial \Sigma_{t}}\frac{1}{\rho_{t}}\frac{\partial \rho_{t}}{\partial \eta_{t}}ds_{t}+\overline{\rho_{t}}\int_{\Sigma_{t}}\frac{|\nabla \rho_{t}|^2}{\rho_{t}^2}d\sigma_{t}\\
	&+&\overline{\rho_{t}}\int_{\Sigma_{t}}(Ric(N_{t}, N_{t})+|A_{t}|^2)d\sigma_{t}
	-\int_{\Sigma_{t}}\Delta_{\Sigma_{t}}\rho_{t}d\sigma_{t}\\
	&-&\int_{ \Sigma_{t}}(Ric (N_{t}, N_{t})+|A_{t}|^2)\rho_{t}d\sigma_{t}.\nonumber
\end{eqnarray*}}Hence, it follows from $\frac{\partial \rho_{t}}{\partial \nu_{t}}=g(N_{t}, \nabla_{N_{t}}X)\rho_{t}$ that:
{\setlength\arraycolsep{2pt}  
\begin{eqnarray}
		\int_{ \Sigma_{t}}(Ric (N_{t}, N_{t})&+&|A_{t}|^2)\rho_{t}d\sigma_{t}=\overline{\rho_{t}}\int_{\Sigma_{t}}(Ric(N_{t}, N_{t})+|A_{t}|^2)d\sigma_{t}+H'(t)\theta(t, x)\nonumber \\
		&+&\overline{\rho_{t}}\int_{\Sigma_{t}}\frac{|\nabla \rho_{t}|^2}{\rho_{t}^2}d\sigma_{t}
		+\overline{\rho_{t}}\int_{\partial \Sigma_{t}}g(N_{t}, \nabla_{N_{t}}X)ds_{t}
		-\int_{\Sigma_{t}}(\Delta_{\Sigma_{t}}\rho_{t})d\sigma_{t},\nonumber
\end{eqnarray}}where $\theta(t, x)=|\Sigma_{t}|-\overline{\rho_{t}}\int_{\Sigma_{t}}\frac{1}{\rho_{t}}d\sigma_{t}$ is a non-positive function. This concludes the proof of the lemma.
\end{proof}

\section{Proof of the Results}\label{sec4}

\subsection{Proof of Theorem \ref{grafico}}
\begin{proof}
To begin, let $(M=\mathbb{R}\times\mathbb{S}^2_{+},g_m)$ be the half de Sitter-Schwarzschild with mass parameter $m>0$. Now, consider the double  $(\widetilde{M},\widetilde{g}_m)$ of $(M,g_m)$ along $\partial M$. More precisely, 
$\widetilde{M} = M\times\{0, 1\}/\sim $, where $(x, 0)\sim (x, 1)$ for all $x\in\partial M$, and $\widetilde{g}_m(x, j) = g_m(x)$ for all $x\in M$ and $j = 0, 1$.  It follows that $\partial M=\mathbb{R}\times\mathbb{S}^1$ is totally geodesic that $(\widetilde{M},\widetilde{g}_m)$ is a $C^{\infty}$ Riemannian manifold and $(\widetilde{M}=\mathbb{R}\times\mathbb{S}^2,\widetilde{g}_m)$ is one de Sitter-Schwarzschild.  

Next, suppose $\Sigma$ is a normal graph over slice $\Sigma_r=\{r\}\times \mathbb{S}_+^2$ given
by $\phi\in C^2(\Sigma_r)$ with $||\phi||_{C^2(\Sigma_r)} < \varepsilon$,  which meets $\partial M$ orthogonally. It easy to check that the modified Hawking mass satisfies $\widetilde{m}_{H}(\Sigma_r)=m$ for any slice of $M$ and $\widetilde{\Sigma}\subset\widetilde{M}$ the double of $\Sigma$, is a closed surface given as a graph over the slice $\widetilde{\Sigma}_r=\{r\}\times\mathbb{S}^2$ that satisfies  $|\widetilde{\Sigma}|_{\widetilde{g}_m} = 2|\Sigma|_{g_m}$ and $|\widetilde{\Sigma}_r|_{\widetilde{g}_m} = 2|\Sigma_r|_{g_m}$. This implies that $m_{H}(\widetilde{\Sigma})=2\widetilde{m}_{H}(\Sigma)$ and $m_{H}(\widetilde{\Sigma}_r)=2\widetilde{m}_{H}(\Sigma_r)$ where $m_H(\cdot)$ and $\widetilde{m}_H(\cdot)$ stand, respectively, for the Hawking mass and the modified Hawking mass. So, it suffices to apply Theorem 1.2 in \cite{mn} to conclude that one has either $m_H(\widetilde{\Sigma}) < m_H(\widetilde{\Sigma}_r)$ or $\widetilde{\Sigma}$ is a slice $\widetilde{\Sigma}_s$ for some $s$. It follows that 
\begin{itemize}
\item[(i)] either $\widetilde{m}_H(\Sigma) < \widetilde{m}_H(\Sigma_r)$;
\item[(ii)] or $\Sigma$ is a slice $\Sigma_s$ for some $s$,
\end{itemize}
which finishes the proof of the theorem.
\end{proof}

\subsection{Proof of Theorem \ref{teoprincipal}}
\begin{proof}	
Let $(M^3, g)$  be a Riemannian manifold and $\Sigma\subset M$ be a free boundary surface under the hypotheses of Theorem \ref{teoprincipal}. As a consequence of Propositions \ref{locally} and \ref{cmc}, the Jacobi opertator of $\Sigma$ is given by  $L_{\Sigma}=\Delta-\lambda_{1}(\Sigma)$, and there is a family of free boundary surfaces with constant mean curvature $\{\Sigma_t\}_{|t|<\varepsilon}$ around $\Sigma=\Sigma_{0}$. Thus, we may invoke Proposition \ref{1variation} and the first variation of the modified Hawking mass, to infer 
{\setlength\arraycolsep{2pt}
\begin{eqnarray*}
	\frac{d}{dt}\widetilde{m}_{H}(\Sigma_{t})
	&=& -\frac{1}{2}\frac{|\Sigma_{t}|^{-\frac{1}{2}}}{(8\pi)^{\frac{1}{2}}}\left(1-\frac{1}{8\pi}\int_{ \Sigma_{t}}(H_{t}^2+\frac{4}{3})d\sigma_{t}\right)\left(\int_{ \Sigma_{t}}H_{t}\rho_{t}d\sigma_{t}-\int_{\partial \Sigma_{t}}g(\nu_{t}, X)ds_{t}\right)\nonumber \\
	&+&\frac{|\Sigma_{t}|^{\frac{1}{2}}}{(8\pi)^\frac{1}{2}}\left(-\frac{1}{8\pi}\int_{ \Sigma_{t}}\left[2H_{t}(\Delta_{\Sigma_{t}}\rho_{t}+Ric(N_{t}, N_{t})\rho_{t}+|A_{t}|^2\rho_{t})\right]d\sigma_{t}\right.\nonumber \\
	&+&\left. \frac{1}{8\pi}\int_{ \Sigma_{t}}(H_{t}^2+\frac{4}{3})H_{t}\rho_{t}d\sigma_{t}-\frac{1}{8\pi}\int_{\partial \Sigma_{t}}(H_{t}^2+\frac{4}{3})g(\nu_{t}, X)d s_{t} \right)\\
	&=&\frac{|\Sigma_{t}|^{\frac{1}{2}}}{(8\pi)^{\frac{3}{2}}}H_t\Big[-\frac{4\pi}{|\Sigma_t|}\int_{\Sigma_{t}}\rho_{t}d\sigma_{t}+\left(\frac{3H_{t}^2}{2}+2\right)\int_{\Sigma_{t}}\rho_{t}d\sigma_{t}
	-2\int_{ \Sigma_{t}}\Delta_{\Sigma_{t}}\rho_{t}d\sigma_{t}\\
	&-&2\int_{ \Sigma_{t}}(Ric(N_{t}, N_{t})+|A_{t}|^2)\rho_{t}d\sigma_{t}\Big],
\end{eqnarray*}}where, in the last equality, we use that $\{\Sigma_t\}_t$ is a family of CMC free boundary surfaces. 

In conjunction with Lemma \ref{stablecmc} and \eqref{gauss}, one sees that
{\setlength\arraycolsep{2pt} 
 \begin{eqnarray*}
\frac{d}{dt}\widetilde{m}_{H}(\Sigma_{t})&=&\frac{-|\Sigma_{t}|^{\frac{1}{2}}}{(8\pi)^{\frac{3}{2}}}H_t\Big[4\pi\overline{\rho_{t}}-\left(\frac{3H_{t}^2}{2}+2\right)|\Sigma_t|\overline{\rho_{t}}
	+2\overline{\rho_{t}}\int_{ \Sigma_{t}}(Ric(N_{t}, N_{t})+|A_{t}|^2)d\sigma_{t}\\
	&+&2H'(t)\theta(t,x)+\overline{\rho_{t}}\int_{\Sigma_{t}}\frac{|\nabla\rho_{t}|^2}{\rho_{t}^2}d\sigma_{t}+2\overline{\rho_{t}}\int_{\partial \Sigma_{t}}g(N_{t}, \nabla_{N_{t}}X)ds_t\Big]\\
	&=&\frac{-|\Sigma_{t}|^{\frac{1}{2}}}{(8\pi)^{\frac{3}{2}}}H_t\Big[4\pi\overline{\rho_{t}}-2\overline{\rho_{t}}\int_{\Sigma_t}K_{\Sigma_t}d\sigma_{t}
	+\overline{\rho_{t}}\int_{ \Sigma_{t}}(R-2)d\sigma_{t}+2H'(t)\theta(t,x)\\
	&+&\overline{\rho_{t}}\int_{ \Sigma_{t}}\left(|A_t|^2-\frac{H_t^2}{2}\right) d\sigma_{t}+\overline{\rho_{t}}\int_{\Sigma_{t}}\frac{|\nabla\rho_{t}|^2}{\rho_{t}^2}d\sigma_{t}+2\overline{\rho_{t}}\int_{\partial \Sigma_{t}}g(N_{t}, \nabla_{N_{t}}X)ds_t\Big],
\end{eqnarray*}}and hence, by utilizing the identity $H^{\partial M} = k_g + \Pi(N_t,N_t)$ and the Gauss-Bonnet Theorem, we obtain
{\setlength\arraycolsep{2pt}
\begin{eqnarray}\nonumber
\frac{d}{dt}\widetilde{m}_{H}(\Sigma_{t})&=&-\frac{|\Sigma_{t}|^{\frac{1}{2}}}{(8\pi)^{\frac{3}{2}}}H_t\Big[2\overline{\rho_t}\int_{\partial\Sigma_t}H^{\partial M}ds_t+\overline{\rho_{t}}\int_{ \Sigma_{t}}(R-2)d\sigma_{t}+2H'(t)\theta(t,x)\\\label{da}
&+&\overline{\rho_{t}}\int_{ \Sigma_{t}}\left(|A_t|^2-\frac{H_t^2}{2}\right) d\sigma_{t}+\overline{\rho_{t}}\int_{\Sigma_{t}}\frac{|\nabla\rho_{t}|^2}{\rho_{t}^2}d\sigma_{t}\Big].
\end{eqnarray}}

Since $\rho_{0}(x)=1$ for all $\in\Sigma$, we can choose $\varepsilon>0$ sufficiently small such that $\rho_{t}(x)>0$ for each $x\in\Sigma_t$ and $t\in (-\varepsilon,\varepsilon)$. Thus, it suffices to use $H^{\partial M}\geq 0$  jointly with  $R\geq 2$ to conclude that $	\frac{d}{dt}\widetilde{m}_{H}(\Sigma_{t})\geq 0$ for $t \in \left[0, \varepsilon\right)$ and $	\frac{d}{dt}\widetilde{m}_{H}(\Sigma_{t})\leq 0$ for  $t \in \left(-\varepsilon, 0\right]$. This implies that
\begin{equation*}
	\widetilde{m}_{H}(\Sigma_{t})\geq \widetilde{m}_{H}(\Sigma),
\end{equation*}
for all $t \in (-\varepsilon, \varepsilon)$. Consequently, taking into account that $\Sigma$ locally maximizes the modified Hawking mass, we  concluded that  $	\frac{d}{dt}\widetilde{m}_{H}(\Sigma_{t})\equiv 0$. This immediately guarantees that $\Sigma_{t}$ is umbilic, $R=2$ along $\Sigma_t$, and $H^{\partial M}=0$ on $\partial \Sigma_{t}$.  Then, we use \eqref{da} to infer
\begin{equation*}
	\frac{\overline{\rho_{t}}}{2}\int_{\Sigma_{t}}\frac{|\nabla\rho_{t}|^2}{\rho_{t}^2}d\sigma_{t}+H'(t)\theta(t, x)=0
\end{equation*} 
for each $t \in (-\varepsilon, \varepsilon)$. On the other hand, from \eqref{sinalH} and Lemma \ref{stablecmc} we have $H'(t)\theta(t, x)\geq 0$,  this enables  to arrive at $\rho_t=1$  $\forall t\in(-\varepsilon, \varepsilon)$. Therefore, up to isometry, we can choose a small neighbourhood of $\Sigma$ such that the
metric is given by $g=dt^2+g_{\Sigma_t}$, where $g_{\Sigma_t}$ is the induced metric by the isometry $f(x, t)=\exp_{x}(tN(x))$.

Now, it suffices to apply Theorem 3.2 in \cite{Huisken} to conclude that 
{\setlength\arraycolsep{2pt}  
 \begin{eqnarray*}
	\frac{\partial}{\partial t}g_{\Sigma_{t}}&=&-2\rho_{t}A_{t}\\
	&=&-H_{t}g_{\Sigma_{t}},
\end{eqnarray*}}
where we have used $\rho_{t}\equiv 1$, $H_{t}$ is constant and $\Sigma_{t}$  is umbilic 
for all $ t \in (-\varepsilon, \varepsilon)$.

Consequently, $g_{\Sigma_{t}}=u_{a}(t)^2g_{\mathbb{S}_{+}^2}$ for all $ t \in (-\varepsilon, \varepsilon)$, where $u_{a}(t)=ae^{-\frac{1}{2}\int_{0}^{t}H(s)ds}$ and 
$a^2=\frac{|\Sigma|}{2\pi}$.

Therefore, the induced metric by isometry $f(x, t)=\exp_{x}(tN(x))$ implies that $g=dt^2+u_{a}(t)^2g_{\mathbb{S}_{+}^2}$ on $\Sigma \times \left(-\varepsilon, \varepsilon\right)$. Moreover, it is easy to see that the function $u_{a}(t)$ is a solution of \eqref{edo}. By uniqueness of the solution to the ODE, we conclude that $g$ is exactly the half de Sitter-Schwarzschild metric  
on $\Sigma \times \left(-\varepsilon, \varepsilon\right)$. Thus, the proof is
completed.
\end{proof}

\begin{acknowledgment}
The authors would like to thank the referee for the valuable suggestions that improved the paper. They would also like to extend special thanks to Cicero T. Cruz for his very helpful comments during the preparation of this article. The first and second authors were partially supported by CNPq/Brazil [Grant: 422900/2021-4], while the third author was partially supported by PAPG/FAPEPI/\linebreak Brazil [Grant: 030/2021].
\end{acknowledgment}

\end{document}